\documentclass[11pt,dvips,twoside]{article}

\usepackage{pslatex}
\usepackage{fancyhdr}
\usepackage{graphicx}
\usepackage{geometry}

\RequirePackage{amsfonts,amssymb,amsmath,amscd,amsthm}
\RequirePackage{txfonts}
\RequirePackage{graphicx}
\RequirePackage{xcolor}
\RequirePackage{geometry}
\RequirePackage{enumerate}

\def\figurename{Figure} 
\makeatletter
\renewcommand{\fnum@figure}[1]{\figurename~\thefigure.}
\makeatother

\def\tablename{Table} 
\makeatletter
\renewcommand{\fnum@table}[1]{\tablename~\thetable.}
\makeatother

\usepackage{amsmath}
\usepackage{amssymb}
\usepackage{amsfonts}
\usepackage{amsthm,amscd}

\newtheorem{theorem}{Theorem}[section]
\newtheorem{lemma}[theorem]{Lemma}
\newtheorem{corollary}[theorem]{Corollary}
\newtheorem{proposition}[theorem]{Proposition}

\theoremstyle{example}

\theoremstyle{definition}
\newtheorem{definition}[theorem]{Definition}

\theoremstyle{remark}
\newtheorem{remark}[theorem]{Remark}

\numberwithin{equation}{section}

\setlength{\topmargin}{0in}
\setlength{\textheight}{9in}   
\setlength{\textwidth}{6in}    
\setlength{\oddsidemargin}{.5in}
\setlength{\evensidemargin}{.5in}
\setlength{\headheight}{26pt}
\setlength{\headsep}{5pt}


\begin{document}
\title{\bfseries\scshape{Classification, Derivations and Centroids of Low-Dimensional Real Trialgebras}}

\author{\bfseries\scshape Ahmed Zahari\thanks{e-mail address: zaharymaths@gmail.com}\\
Universit\'{e} de Haute Alsace,\\
 IRIMAS-D\'{e}partement de Math\'{e}matiques,\\
 18, rue des Fr\`eres Lumi\`ere F-68093 Mulhouse, France.\\
\bfseries\scshape Bouzid Mosbahi\thanks{e-mail address: mosbahibouzid@yahoo.fr}\\
 University of Sfax, Faculty of Sciences of Sfax,  BP 1171, 3000 Sfax, Tunisia.\\
 \bfseries\scshape Imed Basdouri\thanks{e-mail address: basdourimed@yahoo.fr}\\
 University of Gafsa, Faculty of Sciences of Gafsa, 2112 Gafsa, Tunisia.
}

\date{}
\maketitle


\noindent\hrulefill

\noindent {\bf Abstract.}
In this paper we study the structure and the algebraic varieties of associative trialgebras. We provide a classification of $n$-dimensional
associative trialgebras for $n\leq4$. Using the classification result of associative trialgebras, we describe the derivations and centroids
of low-dimensional associative trialgebras. We review some proprieties of the centroids in light of associative trialgebras and and we calculate the centroids
of low-dimensional associative trialgebras.

\noindent \hrulefill

\vspace{.3in}

\noindent {\bf AMS Subject Classification: } 17A30, 17A32, 16D20, 16W25, 17B63 .

\vspace{.08in} \noindent \textbf{Keywords}:
 Associative trialgebra,  Classification, Derivation, Central Derivation, Centroid, Rota-Baxter trialgebra.
\vspace{.3in}
\vspace{.2in}

\pagestyle{fancy} \fancyhead{} \fancyhead[EC]{ }
\fancyhead[EL,OR]{\thepage} \fancyhead[OC]{Ahmed Zahari,  Bouzid Mosbahi, Imed Basdouri} \fancyfoot{}
\renewcommand\headrulewidth{0.5pt}

\section{Introduction}

An associative trialgebra (or triassociative algebra) $(\mathcal{T}, \dashv, \vdash,\bot)$ is consisted of a vector space, three multiplications. It may be viewed as a deformation of an associative algebras. The associative trialgebras degenerate to exactly an associative trialgebras. In this paper, we aim  to study the structure of associative trialgebras.
Let $\mathcal{T}$ be a $n$-dimensional $\mathbb{K}$-linear space and $\left\{e_1, e_2, \cdots, e_n\right\}$ be a basis of $\mathcal{T}$.
A triassociative structure on $\mathcal{T}$ with product $\gamma, \delta$ and $\phi$ are determined by $3n^3$ structure constants $\mathcal{\gamma}_{ij}^k, \mathcal{\delta}_{ij}^k$ and $\mathcal{\phi}_{ij}^k$,  were $e_i\dashv e_j=\sum_{k=1}^n\gamma_{ij}^ke_k,\quad e_i\vdash e_j=\sum_{k=1}^n\delta_{ij}^ke_k$
and  $e_i\bot e_j=\sum_{k=1}^n\phi_{ij}^ke_k$. Requiring the algebra structure to be triassociative and unital gives rise to sub-variety $\mathcal{T}_t$ of $\mathbb{K}^{3n^3}$.
Basic changes in $\mathcal{T}$ result in the natural transport of structure action of $GL_n(k)$ on $\mathcal{T}_t$. Thus, isomorphism classes of $n$-dimensional algebras are one-to-one correspondence with the orbits of the action of $GL_n(k)$ on $\mathcal{T}_t$.

Classification problems of the associative trialgebras using the algebraic  and geometric technique prompted interest in the derivations and centroids of associative trialgebras. The associative trialgebras introduced by Loday \cite{Ld} with a motivation to provide dual of dialgebras, and have been further studied with  connections to several areas in mathematics
and physics.

In this paper our interest  is to study the derivations and centroids of finite dimensional associative trialgebras. The algebra of derivations and centroids are very useful in
algebraic and geometric classification problems of algebras.

The content of the present paper section-wise can be described as follows. In the first section, we introduce the subject alongside with some previously obtained results.
The goal of this paper is to introduce and classify derivations  and centroids of associative trialgebras. The paper is organized as follows.
In section 2, we provide some basic concepts needed for this study.
Section 3 is about the algebraic varieties of associative trialgebras, and we provide classifications, up to isomorphism, of two-dimensional, three-dimensional
and four-associative trialgebras.
In Section 4, we  give the classification of the derivations. Finally, in Section 5, we  give the classification of the centroids. The concept of derivations and  centroids in this case is easily
militated from that of finite-dimensional algebras. The algebra of centroids play important role in the classification problems and in different applications
of algebras.  In the study we make use of the classification results of two, three and four-dimensional associative trialgebras. All algebras and vectors spaces considered are supposed to
over a field $\mathbb{K}$ of characteristic zero.

\section{Preliminaries}
\begin{definition}\label{dia}
An associative dialgebra is a vector space $\mathcal{D}$ equipped with two binary operations : $\dashv$ called left and $\vdash$ called right,\\
(left)$\quad\dashv  : \, \mathcal{D}\times \mathcal{D}\rightarrow \mathcal{D}\quad$ and $\quad\quad$ (right)$\quad\vdash :\, \mathcal{D}\times \mathcal{D}\rightarrow \mathcal{D}$
satisfying the relations

\begin{eqnarray}
(x\dashv y)\dashv z&=&x\dashv(y\dashv z),\label{eq1}\\
(x\dashv y)\dashv z&=&x\dashv(y\vdash z),\label{eq2}\\
(x\vdash y)\dashv z&=&x\vdash(y\dashv z),\label{eq3}\\
(x\dashv y)\vdash z&=&x\vdash(y\vdash z),\label{eq4}\\
(x\vdash y)\vdash z&=&x\vdash(y\vdash z).\label{eq5}
\end{eqnarray}
\end{definition}

\begin{definition}
An associative trialgebra is a $k$-vector space $(\mathcal{T}, \bot, \dashv, \vdash)$ such that $(\mathcal{T},  \dashv, \vdash)$ is a associative dialgebra, $(\mathcal{T}, \bot)$
an associative algebra and,

\begin{eqnarray}
(x\dashv y)\dashv z&=&x\dashv(y\dashv z),\label{eq6}\\
(x\dashv y)\dashv z&=&x\dashv(y\vdash z),\label{eq7}\\
(x\vdash y)\dashv z&=&x\vdash(y\dashv z),\label{eq8}\\
(x\dashv y)\vdash z&=&x\vdash(y\vdash z),\label{eq9}\\
(x\vdash y)\vdash z&=&x\vdash(y\vdash z),\label{eq10}\\
(x\dashv y)\dashv z&=&x\dashv(y\bot z),\label{eq11}\\
(x\bot y)\dashv z&=&x\bot(y\dashv z),\label{eq12}\\
(x\dashv y)\bot z&=&x\bot(y\vdash z),\label{eq13}\\
(x\vdash y)\bot z&=&x\vdash(y\bot z),\label{eq14}\\
(x\bot y)\vdash z&=&x\vdash(y\vdash z),\label{eq15}\\
(x\bot y)\bot z&=&x\bot(y\bot z).\label{eq16}
\end{eqnarray}
\end{definition}

\begin{definition}
Let $(\mathcal{T}_1, \bot_1, \dashv_1, \vdash_1)$ , $(\mathcal{T}_2, \bot_2, \dashv_2, \vdash_2)$ be  associative trialgebras over a fied $\mathbb{K}$. Then a homomorphism
from $\mathcal{T}_1$ to $\mathcal{T}_2$ is a $\mathbb{K}$-linear mapping $\psi : \mathcal{T}_1\longrightarrow \mathcal{T}_2$ such that
\begin{eqnarray}
\psi(\dashv_1y)&=&\psi(x)\dashv_2\psi(y)\\
\psi(\vdash_1y)&=&\psi(x)\vdash_2\psi(y)\\
\psi(\bot_1y)&=&\psi(x)\bot_2\psi(y)
\end{eqnarray}
for all $x, y\in \mathcal{T}_1.$
\end{definition}
\begin{remark}
A bijective homomorphism is an isomorphism of $\mathcal{T}_1$ and $\mathcal{T}_2$.
\end{remark}

\begin{proposition}
Let $(\mathcal{T},\dashv,\bot,\vdash)$ be an associative trialgebras. Then $\mathcal{T}$ is an associative algebra with
respect to the multiplication $\ast : \mathcal{T} \otimes \mathcal{T} \longrightarrow \mathcal{T} :$
\begin{center}
$x \ast y = x \dashv y + x \vdash y - x \bot y$
\end{center}
for any $x, y \in \mathcal{T}.$
\end{proposition}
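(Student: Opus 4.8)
The plan is to verify associativity of the operation $x \ast y = x \dashv y + x \vdash y - x \bot y$ directly by expanding both $(x \ast y) \ast z$ and $x \ast (y \ast z)$ using bilinearity of the three products, and then showing the two expansions agree term by term as a consequence of the eleven triassociativity axioms \eqref{eq6}--\eqref{eq16}. First I would expand
\[
(x \ast y) \ast z = (x \dashv y + x \vdash y - x \bot y) \ast z,
\]
which, upon applying the definition of $\ast$ again and using bilinearity, produces nine terms of the form (product)(product) of $x,y,z$, namely the three outer products $\dashv, \vdash, \bot$ each applied to each of the three inner products. Symmetrically, I would expand $x \ast (y \ast z)$ into its own nine terms.

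The key step is then the bookkeeping: each of the resulting eighteen trilinear expressions should be matched and cancelled using one of the axioms. Concretely, I expect the left-associated term $(x \dashv y) \dashv z$ to pair with $x \dashv (y \dashv z)$ via \eqref{eq6}, the mixed terms to pair via \eqref{eq7}--\eqref{eq9}, the $\vdash$-terms via \eqref{eq10}, and the terms carrying a $\bot$ to be handled by the compatibility relations \eqref{eq11}--\eqref{eq16}. The sign pattern matters: the three terms with a minus sign (those coming from the $-x\bot y$ factor on the left, and correspondingly $-y\bot z$ on the right) must line up so that the surviving positive and negative contributions cancel exactly. I would organize this as a table or grouping that records, for each of the nine products on each side, which axiom rewrites it, so that after substitution the left and right sides become literally identical sums.

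The main obstacle I anticipate is precisely this combinatorial matching rather than any conceptual difficulty: there are many terms, several axioms look superficially similar (for instance \eqref{eq10} and \eqref{eq15} both have right-hand side $x \vdash (y \vdash z)$), and one must be careful that each term is rewritten by exactly the intended relation and that the $\bot$-carrying terms cancel with the correct sign. A clean way to control this is to collect the six terms involving $\bot$ separately and show, using \eqref{eq11}--\eqref{eq16}, that their signed sum vanishes, while the remaining terms involving only $\dashv$ and $\vdash$ reduce to the ordinary dialgebra associativity already guaranteed by \eqref{eq6}--\eqref{eq10}. Once every term on the left is shown to have a matching term on the right, we conclude $(x \ast y) \ast z = x \ast (y \ast z)$, so $(\mathcal{T}, \ast)$ is associative.
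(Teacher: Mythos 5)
Your proposal is correct and follows essentially the same route as the paper: a direct expansion of $(x\ast y)\ast z$ and $x\ast(y\ast z)$ into nine trilinear terms each, followed by term-by-term rewriting via the axioms \eqref{eq6}--\eqref{eq16} and cancellation of the signed $\bot$-terms. The only difference is organizational (your suggestion to group the $\bot$-carrying terms separately), not mathematical.
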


\begin{proof}
Using the axioms of associative trialgebra we have for $x,y \in \mathcal{T}$\\
$(x\ast y) \ast z = (x\dashv y+x\vdash y-x\bot y)\ast z$\\
$=(x\dashv y+ x\vdash y-x\bot y)\dashv z+(x\dashv y +x\vdash y-x\bot y)\vdash z-(x\dashv y +x\vdash y-x\bot y)\bot z$\\
$=(x\dashv y)\dashv z+(x\vdash y)\dashv z-(x\bot y)\dashv z+(x\dashv y)\vdash z+(x\vdash y)\vdash z-(x\bot y)\vdash z- (x\dashv y)\bot z-(x\vdash y)\bot z+(x\bot y)\bot z$\\
$=x\dashv (y\vdash z)+x\vdash (y\dashv z)-x\bot (y\dashv z)+x\vdash (y\vdash z)+x\vdash (y\vdash z)-x\vdash (y\vdash z)-x\bot (y\vdash z)-x\vdash (y\bot z)+x\bot (y\bot z)$\\
$=x\dashv (y\ast z-y\dashv z+y\bot z)+x\vdash (y\dashv z)-x\bot (y\dashv z)+x\vdash (y\vdash z)-x\bot (y\vdash z)-x\vdash (y\bot z)+x\bot (y\bot z)$\\
$=x\dashv (y\ast z)-x\dashv(y\dashv z)+x\dashv(y\bot z)+x\vdash (y\dashv z)-x\bot (y\dashv z)+x\vdash (y\vdash z)-x\bot (y\vdash z)-x\vdash (y\bot z)+x\bot (y\bot z)$\\
$=x\vdash(y\dashv z+y\vdash z-y\bot z)-x\bot (y\dashv z+y\vdash z-y\bot z)+x\dashv(y\ast z)$\\
$=x\dashv(y\ast z)+x\vdash(y\ast z)-x\bot(y\ast z)$\\
$=x\ast(y\ast z).$
\end{proof}

\begin{definition}
Let $\mathcal{A}$ be a $\mathbb{K}$-algebra and let $\lambda \in \mathbb{K}$. If a $\mathbb{K}$-linear map $R : \mathcal{A} \longrightarrow \mathcal{A}$ satisfies
the Rota-Baxter relation:
\begin{center}
 $R(x)R(y) = R(R(x)y + xR(y) + \lambda xy)$
 \end{center}
 $\forall x, y \in \mathcal{A}$, then $R$ is called a Rota-Baxter operator of weight $\lambda$ and $(\mathcal{A}, R)$ is called a Rota-Baxter algebra
of weight $\lambda$.
\end{definition}


\begin{remark}
If $R$ is a Rota-Baxter operator of weight $\lambda \in \mathbb{K}$ on trialgebras $(\mathcal{T},\dashv,\bot,\vdash)$. It is also
a Rota-Baxter operator of weight $\lambda \in \mathbb{K}$ on the associative algebra $(\mathcal{T},\ast)$.
\end{remark}

\begin{proposition}
Let $(\mathcal{T},\dashv,\bot,\vdash)$ be a Rota-Baxter trialgebras of weight $0$. Then $(\mathcal{T},\star)$ is a left-symmetric algebra with
\begin{center}
$x \star y = R(x)  \ast y - y  \ast R(x) \qquad and \qquad x  \ast y = x \dashv y + x \vdash y - x \bot y$
\end{center}
for all $x, y \in \mathcal{T}.$
\end{proposition}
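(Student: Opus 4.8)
The plan is to push the whole computation down to the associative algebra $(\mathcal{T},\ast)$ and use the Rota--Baxter identity there. By the Proposition above, $(\mathcal{T},\ast)$ with $x\ast y=x\dashv y+x\vdash y-x\bot y$ is associative, and by the preceding Remark $R$ is a Rota--Baxter operator of weight $0$ on $(\mathcal{T},\ast)$; hence
\[
R(x)\ast R(y)=R\bigl(R(x)\ast y+x\ast R(y)\bigr)\qquad\text{for all }x,y\in\mathcal{T}.
\]
This relation, together with the associativity of $\ast$, is the only input I expect to need.

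To organize the calculation I would first introduce the two half-products $x\succ y:=R(x)\ast y$ and $x\prec y:=x\ast R(y)$, so that the product in the statement reads $x\star y=x\succ y-y\prec x$. I would then check the three compatibility identities
\[
(x\prec y)\prec z=x\prec(y\prec z)+x\prec(y\succ z),\qquad (x\succ y)\prec z=x\succ(y\prec z),
\]
\[
(x\succ y)\succ z+(x\prec y)\succ z=x\succ(y\succ z).
\]
Each is a one-line verification: the middle identity is pure associativity of $\ast$, while the first and third are exactly the places where the Rota--Baxter relation is invoked, collapsing $R$ of a sum into a single product $R(\cdot)\ast R(\cdot)$.

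It remains to deduce left-symmetry, namely that the associator $(x,y,z)_{\star}:=(x\star y)\star z-x\star(y\star z)$ satisfies $(x,y,z)_{\star}=(y,x,z)_{\star}$. Expanding each $\star$ into $\succ$ and $\prec$ yields eight terms; applying the third identity to the pair $(x\succ y)\succ z$ and $x\succ(y\succ z)$, the second identity to $(y\succ z)\prec x$, and the first identity to $(z\prec y)\prec x$ reduces the expression to six terms of the shapes $(\,\cdot\prec\cdot)\succ z$, $z\prec(\cdot\succ\cdot)$ and $\cdot\succ(z\prec\cdot)$. These six terms split into three pairs that are interchanged by the transposition $x\leftrightarrow y$, so $(x,y,z)_{\star}$ is visibly symmetric in its first two arguments, which is the claim.

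The main obstacle, and the reason for factoring through $\succ$ and $\prec$, is the term $R(x\star y)$ that appears in a naive expansion of $(x\star y)\star z$: it forces nested expressions such as $R(R(x)\ast y)$ and $R(y\ast R(x))$ that do not simplify individually. Working with the half-products confines every use of the Rota--Baxter identity to the two compatibility relations above, after which left-symmetry follows formally from associativity of $\ast$ and the symmetry of the reduced associator.
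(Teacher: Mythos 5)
Your proof is correct, but it is organized quite differently from the paper's. The paper proves left-symmetry by brute force: it expands $(x\star y)\star z - x\star(y\star z)-(y\star x)\star z+y\star(x\star z)$ directly into sixteen terms involving $R$ and $\ast$, and then asserts that the Rota--Baxter identity and associativity make the whole expression vanish, without displaying the actual cancellation. You instead factor the argument through the dendriform structure: you set $x\succ y=R(x)\ast y$, $x\prec y=x\ast R(y)$, verify the three dendriform axioms (two of which use the weight-$0$ Rota--Baxter relation $R(x)\ast R(y)=R(R(x)\ast y+x\ast R(y))$, the third only associativity of $\ast$), and then derive left-symmetry of $x\star y=x\succ y-y\prec x$ purely formally: after the three reductions you describe, the associator becomes $-(x\prec y)\succ z-(y\prec x)\succ z-z\prec(x\succ y)-z\prec(y\succ x)+x\succ(z\prec y)+y\succ(z\prec x)$, which is manifestly symmetric in $x$ and $y$. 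I checked this reduction and it is exactly right. What your route buys is twofold: every invocation of the Rota--Baxter identity is confined to two one-line lemmas, so the otherwise opaque sixteen-term cancellation becomes a visible symmetry of a six-term expression; and you establish en route the stronger, reusable fact that $(\mathcal{T},\prec,\succ)$ is a dendriform algebra. What the paper's direct expansion buys is only that it avoids introducing auxiliary operations; the price is that the key step of its proof is asserted rather than shown. Your version is the more transparent and, in my view, the more convincing of the two.
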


\begin{proof}
For $x,y \in \mathcal{T}$ we have
$$\begin{array}{ll}
(x \star y)\star z
&=(R(x) \ast y - y \ast R(x)) \star z\\
&=R(R(x) \ast y - y \ast R(x)) \ast z - z \ast R(R(x) \ast y - y \ast R(x))\\
&=R(R(x) \ast y) \ast z - R(y \ast R(x)) \ast z - z \ast R(R(x) \ast y) + z \ast R(y \ast R(x))
\end{array}$$
and
$$\begin{array}{ll}
x \star (y \star z)
&= x \star (R(y) \ast z - z \ast R(y))\\
&= R(x) \ast (R(y) \ast z - z \ast R(y)) - (R(y) \ast z - z \ast R(y)) \ast R(x)\\
&=R(x) \ast (R(y) \ast z) - R(x) \ast (z \ast R(y)) - (R(y) \ast z) \ast R(x) + (z \ast R(y)) \ast R(x)
\end{array}.$$
Then
$$\begin{array}{ll}
(x\star y)\star z - x\star (y \star z)
&-(y\star x)\star z+y\star (x \star z)\\
&= R(R(x) \ast y) \ast z - R(y \ast R(x)) \ast z - z \ast R(R(x) \ast y) + z \ast R(y \ast R(x))\\
&-R(x) \ast (R(y) \ast z) + R(x) \ast (z \ast R(y)) + (R(y) \ast z) \ast R(x) - (z \ast R(y)) \ast R(x)\\
&-R(R(y) \ast x) \ast z + R(x \ast R(y)) \ast z + z \ast R(R(y) \ast x) - z \ast R(x \ast R(y))\\
&+R(y) \ast (R(x) \ast z) - R(y) \ast (z \ast R(x)) - (R(x) \ast z) \ast R(y) + (z \ast R(x)) \ast R(y).\\
\end{array}.$$

Using $x \ast y = x \dashv y + x \vdash y - x \bot y$ and the Rota-Baxter identities.Then associativity leads to\\
$(x\star y)\star z - x\star (y \star z)-(y\star x)\star z+y\star (x \star z)=0.$  Therefore we obtain $(x,y,z)=(y,x,z).$
\end{proof}

\begin{proposition}
Let $(A,\dashv,\bot,\vdash,R)$ be a Rota-Baxter trialgebras of weight $-1$. Then $(A,\star)$ is an associative algebra with
\begin{center}
$x \star y = R(x) \ast y - y \ast R(x) - x \ast y \qquad and \qquad x \ast y = x \dashv y + x \vdash y - x \bot y.$
\end{center}
\end{proposition}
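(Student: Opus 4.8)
The plan is to reduce the statement to a purely associative computation and then run the standard Rota--Baxter expansion. By the Proposition asserting that $x\ast y = x\dashv y + x\vdash y - x\bot y$ defines an associative product, $(A,\ast)$ is an associative algebra, and by the Remark a Rota--Baxter operator of weight $-1$ on the trialgebra is again one on $(A,\ast)$; that is, $R(x)\ast R(y)=R\bigl(R(x)\ast y + x\ast R(y) - x\ast y\bigr)$ for all $x,y\in A$. Consequently I may forget the three operations $\dashv,\vdash,\bot$ entirely and work inside $(A,\ast)$, writing every product through $\ast$ and using its associativity freely. The whole statement then becomes the assertion that the bracket $x\star y = R(x)\ast y - y\ast R(x) - x\ast y$ is associative on the associative Rota--Baxter algebra $(A,\ast,R)$ of weight $-1$.

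First I would expand both $(x\star y)\star z$ and $x\star(y\star z)$ directly from the definition of $\star$, using associativity of $\ast$ to drop all parentheses. Each side breaks into terms carrying zero, one, or two occurrences of $R$. The terms with at most one $R$ coincide on the two sides immediately by associativity of $\ast$, so the content is concentrated in the two-$R$ terms. Here the engine is the weight $-1$ identity above: every time two $R$-images sit adjacent, as in $R(u)\ast R(v)$, I rewrite it as $R\bigl(R(u)\ast v + u\ast R(v) - u\ast v\bigr)$, which trades a product of two $R$'s for a single nested application of $R$. A convenient way to package this is to isolate, as a preliminary lemma, the relation controlling $R(x\star y)$ in terms of $R$-images and $\ast$; once that compatibility is in hand, the two triple products collapse to the same normal form and associativity drops out by substitution.

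The main obstacle is purely the bookkeeping of the two-$R$ terms, and in particular the \emph{mixed} terms in which the two $R$-images are separated by the middle variable, of the shape $R(x)\ast z\ast R(y)$. Such terms are not touched by the Rota--Baxter identity, since the two $R$'s are not adjacent, so the verification must exhibit them occurring in matching positions on the two sides, where they cancel rather than simplify. Keeping track of these mixed contributions, together with the signs coming from the three summands of $\star$, is where essentially all of the work sits; once they are shown to cancel, what remains is an identity between single-$R$ and $R$-free terms that follows at once from the associativity of $\ast$ and the weight $-1$ Rota--Baxter relation, yielding $(x\star y)\star z = x\star(y\star z)$.
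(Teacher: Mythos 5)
Your reduction to the associative algebra $(A,\ast)$ with $R$ a weight $-1$ Rota--Baxter operator is legitimate and is in effect what the paper's own proof does (it too expands both triple products in terms of $\ast$ and appeals to ``the Rota--Baxter identities''). But the step you defer as bookkeeping --- the cancellation of the mixed two-$R$ terms --- is exactly where the argument breaks down, and it cannot be repaired. Writing $\ast$ as juxtaposition,
\[
x\star(y\star z)=R(x)R(y)z-R(x)zR(y)-R(x)yz-R(y)zR(x)+zR(y)R(x)+yzR(x)-xR(y)z+xzR(y)+xyz,
\]
which contains the mixed terms $-R(x)zR(y)-R(y)zR(x)$. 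On the other side, $(x\star y)\star z=R(x\star y)z-zR(x\star y)-(x\star y)z$, so $z$ only ever sits at the extreme left or extreme right of a monomial; no term of the shape $R(\cdot)\,z\,R(\cdot)$ can occur there, and the Rota--Baxter identity, which only merges \emph{adjacent} $R$-factors, cannot manufacture one. So the mixed terms do not ``occur in matching positions on the two sides'': one side has them and the other does not. (In the weight-$0$ proposition just before this one, the paper proves left-symmetry, i.e.\ it antisymmetrizes in $x$ and $y$, and it is that symmetrization which kills the mixed terms; plain associativity offers no such escape.)

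In fact the proposition as stated is false, so no amount of bookkeeping will close the gap. Take $A=M_2(\mathbb{R})$ with $\dashv\,=\,\vdash\,=\,\bot$ all equal to the matrix product (every trialgebra axiom reduces to associativity, and $x\ast y=xy$), and let $R$ be the projection onto the upper-triangular matrices along the strictly lower-triangular ones; both summands are subalgebras, so $R$ is a Rota--Baxter operator of weight $-1$. With $a=E_{11}$, $b=E_{12}$, $c=E_{21}$ one gets $b\star c=-E_{22}$ and $c\star a=-E_{21}$, hence $(b\star c)\star a=0$ while $b\star(c\star a)=E_{22}$. The product that \emph{is} associative for a weight $-1$ operator is the standard double product $x\star y=R(x)\ast y+x\ast R(y)-x\ast y$, with middle term $+\,x\ast R(y)$ rather than $-\,y\ast R(x)$; the two formulas agree only when $\ast$ is commutative. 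Your proof, like the paper's, can only be completed once the statement is corrected to that form.
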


\begin{proof}
For $x,y \in \mathcal{T}$ we have

$$\begin{array}{ll}
x \star (y \star z)
&= R(x) \ast (R(y) \ast z - z \ast R(y) - y \ast z)\\
& - (R(y) \ast z-z\ast R(y)-y\ast z) \ast R(x)-x \ast(R(y) \ast z - z \ast R(y) - y \ast z)
\end{array}$$
and
$$\begin{array}{ll}
(x \star y) \star z
&=R(R(x) \ast y-y\ast R(x)-x \ast y)\ast z\\
&-z \ast R(R(x) \ast y-y \ast R(x)-x\ast y)-(R(x) \ast y-y \ast R(x)- x \ast y) \ast z
\end{array}$$
Then we obtain
$$\begin{array}{ll}
x \star (y \star z) - (x \star y) \star z
&=R(x) \ast (R(y) \ast z - z \ast R(y) - y \ast z) - (R(y) \ast z - z \ast R(y) - y \ast z) \ast R(x)\\
&- x \ast (R(y) \ast z - z \ast R(y) - y \ast z)-R(R(x) \ast y + y \ast R(x) + x \ast y) \ast z\\
&+ z \ast R(R(x) \ast y + y \ast R(x) + x \ast y)+ (R(x) \ast y + y \ast R(x) + x \ast y) \ast z
\end{array}$$
Then it vanishes using $x \ast y = x \dashv y + x \vdash y - x \bot y$ and the Rota-Baxter identities.
\end{proof}

\section{Classification of low-dimensional associative trialgebras}
The classification problem of algebra is one of the important problems of modern algebras.

In this section we recall some elementary facts on triassociative algebras that will be used later on.
Let V be an n-dimensional vector space and $\left\{e_1, e_2, \cdots, e_n\right\}$ be a basis of V. Then a triassociative structure on V can be defined as three bilinear mappings:
$$\lambda : V \times V \longrightarrow V$$
 representing the left product $\dashv$,
$$\mu : V \times V \longrightarrow V$$
representing the right product $\vdash$, and
$$\xi : V \times V \longrightarrow V$$
 representing the middle product $\perp$ , consented via triassociative algebra axioms.
Hence, an n-dimensional triassociative algebra $\mathcal{T}$ can be seen as a
triple $\mathcal{T} = (V,\lambda,\mu,\xi)$ where $\lambda , \mu$ and $\xi$ are associative laws on V. We will
denote by Trias the set of triassociative algebra laws on V.\\
Let us denote by $\mathcal{\gamma}_{ij}^k, \mathcal{\delta}_{rs}^t$ and $\mathcal{\phi}_{pl}^m$, where, $i, j,k,r,s,t,p,l,m$ the structure constants of a triassociative algebra with respect to the basis $\left\{e_1, e_2, \cdots, e_n\right\}$ of V, where
$$e_i\dashv e_j=\sum_{k=1}^n\gamma_{ij}^ke_k,\quad e_r\vdash e_s=\sum_{t=1}^n\delta_{rs}^te_t,\quad
\text{and}\quad e_p\bot e_l=\sum_{m=1}^n\phi_{pl}^me_m,\quad for\quad i,j, k, q, r, s,t, p, l\in \left\{1,n\right\}.$$
Then Trias can be considered as a closed subset of $3n^3$-dimensional
affine space specified by the following system of polynomial equations with
respect to the structure constants $\mathcal{\gamma}_{ij}^k, \mathcal{\delta}_{rs}^t$ and $\mathcal{\phi}_{pl}^m$:
$$\left\{\begin{array}{c}
\begin{array}{ll}
\sum_{p=1}^n(\gamma_{ij}^p\gamma_{pk}^q-\gamma_{jk}^p\gamma_{ip}^q)=0,\quad i,j,q\in \left\{1,n\right\}\\
\sum_{p=1}^n(\gamma_{ij}^p\gamma_{pk}^q-\delta_{jk}^p\gamma_{ip}^q)=0,\quad i,j,q\in \left\{1,n\right\}\\
\sum_{p=1}^n(\delta_{ij}^p\gamma_{pk}^q-\gamma_{jk}^p\delta_{ip}^q)=0,\quad i,j,q\in \left\{1,n\right\}\\
\sum_{p=1}^n(\gamma_{ij}^p\delta_{pk}^q-\delta_{jk}^p\delta_{ip}^q)=0,\quad i,j,q\in \left\{1,n\right\}\\
\sum_{p=1}^n(\delta_{ij}^p\delta_{pk}^q-\delta_{jk}^p\delta_{ip}^q)=0,\quad i,j,q\in \left\{1,n\right\}\\
\sum_{p=1}^n(\gamma_{ij}^p\gamma_{pk}^q-\phi_{jk}^p\gamma_{ip}^q)=0,\quad i,j,q\in \left\{1,n\right\}\\
\end{array}
\end{array}\right.
\left\{\begin{array}{c}
\begin{array}{ll}
\sum_{p=1}^n(\phi_{ij}^p\gamma_{pk}^q-\gamma_{jk}^p\phi_{ip}^q)=0,\quad i,j,q\in \left\{1,n\right\}\\
\sum_{p=1}^n(\gamma_{ij}^p\phi_{pk}^q-\delta_{jk}^p\phi_{ip}^q)=0,\quad i,j,q\in \left\{1,n\right\}\\
\sum_{p=1}^n(\delta_{ij}^p\phi_{pk}^q-\phi_{jk}^p\delta_{ip}^q)=0,\quad i,j,q\in \left\{1,n\right\}\\
\sum_{p=1}^n(\phi_{ij}^p\delta_{pk}^q-\delta_{jk}^p\delta_{ip}^q)=0,\quad i,j,q\in \left\{1,n\right\}\\
\sum_{p=1}^n(\phi_{ij}^p\phi_{pk}^q-\phi_{jk}^p\phi_{ip}^q)=0,\quad i,j,q\in \left\{1,n\right\}.
\end{array}
\end{array}\right.$$
Thus Trias can be considered as a subvariety of $3n^3$-dimensional affine space. On Trias the linear matrix group  $GL_n$
 acts by changing of basis.

\begin{lemma}
The axioms in Definition \ref{dia} are respectively equivalent to
$$\left\{\begin{array}{c}
\begin{array}{ll}
\gamma_{ij}^p\gamma_{pk}^q-\gamma_{jk}^p\gamma_{ip}^q=0,\quad i,j,q\in \left\{1,n\right\}\\
\gamma_{ij}^p\gamma_{pk}^q-\delta_{jk}^p\gamma_{ip}^q=0,\quad i,j,q\in \left\{1,n\right\}\\
\delta_{ij}^p\gamma_{pk}^q-\gamma_{jk}^p\delta_{ip}^q=0,\quad i,j,q\in \left\{1,n\right\}\\
\gamma_{ij}^p\delta_{pk}^q-\delta_{jk}^p\delta_{ip}^q=0,\quad i,j,q\in \left\{1,n\right\}\\
\delta_{ij}^p\delta_{pk}^q-\delta_{jk}^p\delta_{ip}^q=0,\quad i,j,q\in \left\{1,n\right\}\\
\gamma_{ij}^p\gamma_{pk}^q-\phi_{jk}^p\gamma_{ip}^q=0,\quad i,j,q\in \left\{1,n\right\}\\
\end{array}
\end{array}\right.
\left\{\begin{array}{c}
\begin{array}{ll}
\phi_{ij}^p\gamma_{pk}^q-\gamma_{jk}^p\phi_{ip}^q=0,\quad i,j,q\in \left\{1,n\right\}\\
\gamma_{ij}^p\phi_{pk}^q-\delta_{jk}^p\phi_{ip}^q=0,\quad i,j,q\in \left\{1,n\right\}\\
\delta_{ij}^p\phi_{pk}^q-\phi_{jk}^p\delta_{ip}^q=0,\quad i,j,q\in \left\{1,n\right\}\\
\phi_{ij}^p\delta_{pk}^q-\delta_{jk}^p\delta_{ip}^q=0,\quad i,j,q\in \left\{1,n\right\}\\
\phi_{ij}^p\phi_{pk}^q-\phi_{jk}^p\phi_{ip}^q=0,\quad i,j,q\in \left\{1,n\right\}.
\end{array}
\end{array}\right.$$
\end{lemma}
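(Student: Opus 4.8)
The plan is to prove the equivalence by the standard device of evaluating each defining identity on basis triples and comparing coordinates. Each axiom \eqref{eq6}--\eqref{eq16} is trilinear in $x,y,z$, so by bilinearity of the three products it holds for all $x,y,z\in\mathcal{T}$ if and only if it holds on every triple $(e_i,e_j,e_k)$ of basis vectors. Fixing such a triple and expanding both sides by means of the defining relations $e_i\dashv e_j=\sum_k\gamma_{ij}^ke_k$, $e_r\vdash e_s=\sum_t\delta_{rs}^te_t$ and $e_p\bot e_l=\sum_m\phi_{pl}^me_m$ turns each operadic relation into a finite family of scalar equations indexed by $i,j,k$ and the output coordinate $q$.

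To see the mechanism, I would first treat \eqref{eq6}, namely $(x\dashv y)\dashv z=x\dashv(y\dashv z)$. Substituting $x=e_i$, $y=e_j$, $z=e_k$ gives
\begin{equation*}
(e_i\dashv e_j)\dashv e_k=\sum_{p}\gamma_{ij}^p(e_p\dashv e_k)=\sum_{q}\Big(\sum_{p}\gamma_{ij}^p\gamma_{pk}^q\Big)e_q,
\end{equation*}
while the right-hand side expands as
\begin{equation*}
e_i\dashv(e_j\dashv e_k)=\sum_{p}\gamma_{jk}^p(e_i\dashv e_p)=\sum_{q}\Big(\sum_{p}\gamma_{jk}^p\gamma_{ip}^q\Big)e_q.
\end{equation*}
Since $\{e_1,\dots,e_n\}$ is a basis, the two sides coincide if and only if their $e_q$-coefficients agree for every $q$, which is precisely $\sum_{p}\big(\gamma_{ij}^p\gamma_{pk}^q-\gamma_{jk}^p\gamma_{ip}^q\big)=0$, the first equation of the system (the summation over the internal index $p$ being left implicit in the lemma's notation). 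The mixed axioms are handled in the same way: for example \eqref{eq13}, that is $(x\dashv y)\bot z=x\bot(y\vdash z)$, yields $\sum_p\gamma_{ij}^p\phi_{pk}^q$ on the left and $\sum_p\delta_{jk}^p\phi_{ip}^q$ on the right, and comparing coefficients gives $\gamma_{ij}^p\phi_{pk}^q-\delta_{jk}^p\phi_{ip}^q=0$.

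I would then repeat this expansion for each of the eleven axioms in turn, reading off from each one the \emph{outer} operation, which fixes the family of constants ($\gamma$, $\delta$ or $\phi$) carrying the internal index $p$ in the second factor, and the \emph{inner} operation, which fixes the family producing $p$ as its output; this matches every relation to exactly one equation of the stated system. Because the passage from the identity to its coordinate form via linear independence of the basis is reversible, the comparison of coefficients is an equivalence, so both implications are obtained simultaneously and the defining identities are equivalent to the polynomial system. No step here carries genuine mathematical difficulty; the only real obstacle is the bookkeeping, namely keeping straight which of $\gamma,\delta,\phi$ occupies each of the two slots in every term so that the eleven equations are correctly paired with \eqref{eq6}--\eqref{eq16} without transcription errors.
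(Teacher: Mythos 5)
Your proposal is correct: expanding each of the eleven defining identities on basis triples and comparing $e_q$-coefficients is exactly the computation that produces the stated polynomial system, and the paper itself states this lemma without proof, treating precisely this coefficient comparison as immediate (it is the same expansion used to present $Trias$ as a subvariety of $\mathbb{K}^{3n^3}$ just before the lemma). Your pairing of each axiom with its equation is accurate, including the correct reading of the lemma's suppressed summation over the internal index $p$ and the observation that the reference to Definition~2.1 should really be to the full list of trialgebra axioms \eqref{eq6}--\eqref{eq16}.
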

Note that ${Trias}_n^m$ denote $m^{th}$ isomorphism class of associative trialgebra in dimension $n.$
\begin{theorem}\label{the1}
 Any 2-dimensional  real associative trialgebra either is associative or isomorphic to one of the following pairwise non-isomorphic triassociative algebras:
\end{theorem}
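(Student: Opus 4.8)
The plan is to reduce the classification to solving a polynomial system in the structure constants and then organizing its solutions under the change-of-basis action. Fix a basis $\{e_1,e_2\}$ and write the three products in the general form $e_i\dashv e_j=\sum_k\gamma_{ij}^k e_k$, $e_i\vdash e_j=\sum_k\delta_{ij}^k e_k$, and $e_i\bot e_j=\sum_k\phi_{ij}^k e_k$, so that a trialgebra structure is encoded by $24$ real parameters. By the Lemma preceding the statement, the triassociativity axioms \eqref{eq6}--\eqref{eq16}, evaluated on all basis triples $(e_i,e_j,e_k)$, are equivalent to an explicit finite system of homogeneous quadratic equations in the $\gamma,\delta,\phi$. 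The first step is to write this system out in the case $n=2$.

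The second step is to solve that system. I would process it by Gr\"obner-basis elimination, or an equivalent hand case-split, branching according to which of the leading coefficients $\gamma_{11}^1,\delta_{11}^1,\dots$ vanish. Each consistent branch yields a parametrized family of admissible structure constants. The trialgebras in which the three products collapse to a single associative multiplication (that is, those obtained from an associative algebra by setting $\dashv=\vdash=\bot$) are set aside as the ``associative'' alternative in the statement, leaving the genuinely triassociative families to be normalized.

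The third step is to pass to isomorphism classes. Here $GL_2(\mathbb{R})$ acts on the solution set by transport of structure, and two parameter vectors give isomorphic trialgebras exactly when they lie in one orbit. Using a generic change of basis $e_i\mapsto\sum_j a_{ij}e_j$ I would normalize the parameters inside each branch to a canonical representative, exploiting the remaining scaling and diagonalization freedom on the associated associative product $x\ast y=x\dashv y+x\vdash y-x\bot y$ furnished by the Proposition, which is a change-of-basis invariant and hence a convenient organizing skeleton. Over $\mathbb{R}$ one must track the sign subtleties that prevent rescaling of nonzero parameters to $+1$, so a single complex normal form may split into two real ones.

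The final step, and the one I expect to be the main obstacle, is verifying that the resulting list is \emph{pairwise} non-isomorphic. This requires invariants fine enough to separate the canonical forms: I would use the isomorphism type of the associated associative algebra $(\mathcal{T},\ast)$, the dimensions of the left and right annihilators of each of $\dashv,\vdash,\bot$, and the rank of each product as a bilinear map. The delicate point is that several forms may share a common $\ast$-algebra and differ only in how that structure is distributed among the three products, so the annihilator and rank data must be assembled carefully; any residual coincidences would be ruled out by solving directly for an intertwining matrix in $GL_2(\mathbb{R})$ and deriving a contradiction.
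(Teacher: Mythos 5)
Your strategy is sound but organized quite differently from the paper's. The paper does not set up the full $24$-parameter variety at once: it exploits the fact that each individual product of a trialgebra is itself associative (this is forced by the axioms), so $(\mathcal{T},\dashv)$ must be isomorphic to one of the finitely many known two-dimensional associative algebras. It therefore fixes $\dashv$ in a normal form such as $e_1\dashv e_1=e_1$, $e_2\dashv e_1=e_2$, writes $\vdash$ and $\bot$ with sixteen undetermined coefficients $\alpha_i,\beta_i$, and imposes the mixed axioms on basis triples to pin those coefficients down; running over the list of admissible $\dashv$-normal forms produces the table. This uses up essentially all of the $GL_2(\mathbb{R})$ freedom at the outset, so only the (small) automorphism group of the chosen $\dashv$-algebra remains to act on the residual parameters, and trialgebras built over non-isomorphic $\dashv$-parts are automatically non-isomorphic. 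Your all-at-once Gr\"obner/orbit computation is more self-contained --- it does not presuppose the classification of two-dimensional associative algebras --- but it is computationally much heavier and defers the entire pairwise non-isomorphism verification to the end, which you rightly identify as the main obstacle; your proposed invariants (the isomorphism type of the associated $\ast$-algebra, together with annihilator dimensions and ranks of each product) are appropriate for that step and in fact go beyond what the paper's proof records, since the paper verifies neither completeness nor pairwise distinctness explicitly. Both arguments, yours and the paper's, remain schematic: the theorem ultimately stands or falls on actually executing the case analysis, which neither text carries out in full.
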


${Trias}_2^1$ :	
$\begin{array}{ll}
e_1\dashv e_2=ae_1,\\
e_2\dashv e_2=ae_2,\\
\end{array}\quad$
$\begin{array}{ll}
e_2\vdash e_1=ae_1,\\
e_2\vdash e_2=ae_2,\\
\end{array}\quad$
$\begin{array}{ll}
e_1\bot e_1=be_1,\\
e_1\bot e_2=be_1+ae_2,
\end{array}$\\

${Trias}_2^2$ :	
$\begin{array}{ll}
e_1\dashv e_1=e_1,\\
e_2\dashv e_1=e_2,\\
\end{array}\quad$
$\begin{array}{ll}
e_1\vdash e_1=e_1,\\
e_1\vdash e_2=e_2,
\end{array}\quad\quad$
$\begin{array}{ll}
e_1\bot e_2=e_1+ae_2,\\
e_2\bot e_2=e_2.
\end{array}$\\

${Trias}_2^3$ :	
$\begin{array}{ll}
e_2\dashv e_2=e_2,\\
\end{array}\quad$
$\begin{array}{ll}
e_2\vdash e_1=e_1,\\
e_2\vdash e_2=e_2,\\
\end{array}\quad\quad$
$\begin{array}{ll}
e_2\bot e_1=e_1,\\
e_2\bot e_2=e_2.
\end{array}$\\

${Trias}_2^4$ :	
$\begin{array}{ll}
e_1\dashv e_2=e_1,\\
e_2\dashv e_2=e_2,\\
\end{array}\quad$
$\begin{array}{ll}
e_2\vdash e_2=e_1+e_2,
\end{array}\,\,$
$\begin{array}{ll}
e_2\bot e_2=e_1+e_2.
\end{array}$\\

${Trias}_2^5$ :	
$\begin{array}{ll}
e_1\dashv e_1=e_1,\\
\end{array}\quad$
$\begin{array}{ll}
e_2\vdash e_1=e_1,\\
e_2\vdash e_2=e_2,
\end{array}\quad\quad$
$\begin{array}{ll}
e_1\bot e_1=e_1,\\
e_1\bot e_2=e_2.
\end{array}$\\

${Trias}_2^6$ :	
$\begin{array}{ll}
e_1\dashv e_1=e_1,\\
e_2\dashv e_1=e_2,\\
\end{array}\quad$
$\begin{array}{ll}
e_1\vdash e_1=e_1,
\end{array}\quad\quad$
$\begin{array}{ll}
e_1\bot e_1=e_1.
\end{array}$\\

${Trias}_2^7$ :	
$\begin{array}{ll}
e_1\dashv e_1=e_1,\\
e_2\dashv e_1=e_2,\\
\end{array}\quad$
$\begin{array}{ll}
e_1\vdash e_1=e_1,\\
e_1\vdash e_2=e_2,
\end{array}\quad\quad$
$\begin{array}{ll}
e_1\bot e_1=e_1,\\
e_1\bot e_2=e_2.\\
\end{array}$\\

${Trias}_2^8$ :	
$\begin{array}{ll}
e_1\dashv e_1=ae_1,\\
e_2\dashv e_1=ae_2,\\
\end{array}\,\,$
$\begin{array}{ll}
e_1\vdash e_1=ae_1,\\
e_1\vdash e_2=ae_2,
\end{array}\quad$
$\begin{array}{ll}
e_1\bot e_1=ae_1+be_2.
\end{array}$

\begin{proof}
Let $\mathcal{T}$ be a two-dimensional vector space. To determine an associative trialgebras structure on $\mathcal{T}$ , we consider $\mathcal{T}$ with respect to one associative trialgebra
operation. Let $\mathcal{A}=(\mathcal{T}, \dashv)$ be the algebra
$$e_1 \dashv e_1 = e_1, \: e_2 \dashv e_1 = e_2$$
The multiplication operations $\vdash,\bot \:in\: \mathcal{T}$ , we define as follows:

$$\begin{array}{ll}
e_1\vdash e_1 = \alpha_1e_1+\alpha_2e_2,\\
e_1 \vdash e_2 =\alpha_3e_1+\alpha_4e_2,\\
e_2\vdash e_1 =\alpha_5e_1+\alpha_6e_2,\\
\end{array}
\begin{array}{ll}
e_2\vdash e_2 =\alpha_7e_1+\alpha_8e_2,\\
e_1\bot e_1 = \beta_1e_1+\beta_2e_2,\\
e_1\bot e_2 = \beta_3e_1+\beta_4e_2,\\
\end{array}
\begin{array}{ll}
e_2\bot e_1 = \beta_5e_1+\beta_6e_2,\\
e_2\bot e_2 = \beta_7e_1+\beta_8e_2.
\end{array}$$

Now verifying associative trialgebra axioms, we get several constraints for the coefficients $\alpha_i,\beta_i$ where $1\leq i\leq 8.$\\
Applying $(e_1 \vdash e_1)\dashv e_1 = e_1\vdash (e_1\dashv e_1)$, we get $(\alpha_1e_1+\alpha_2e_2)\dashv e_1 = e_1\vdash e_1$ and then $\alpha_1e_1 = e_1$. Therefore $\alpha_1 = 1$.
The verification, $(e_1 \vdash e_1) \dashv e_1 = e_1 \vdash (e_1 \vdash e_1)$ leads to $(e_1 + \alpha_2e_2) \vdash e_1 = e_1 \vdash e_1$. We have $e_1 +\alpha_2e_2 = e_1$.
Hence we have $\alpha_2 = 0$. Consider $(e_1 \bot e_1) \dashv e_1 = e_1 \bot (e_1 \dashv e_1)$. It implies that $(\beta_1e_1 + \beta_2e_2) \dashv e_1 = e_1 \bot e_1$,
therefore $\beta_1 = 1$ and $\beta_2 = 0$.
then $\mathcal{A}=(\mathcal{T}, \dashv)$ it is isomorphic to ${Trias}_2^6$.
The other associative trialgebras of the list of Theorem \ref{the1} can be obtained by minor modifications of the observation above.
\end{proof}

\begin{theorem}\label{the2}
 Any 3-dimensional real associative trialgebra either is associative or isomorphic to one of the following  pairwise non-isomorphic associative trialgebras :
\end{theorem}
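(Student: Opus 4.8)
The plan is to run the same structure-constant method used for Theorem~\ref{the1}, now on a three-dimensional space $\mathcal{T}$ with basis $\{e_1,e_2,e_3\}$. First I would write the three products in full generality,
$$e_i\dashv e_j=\sum_{k=1}^3\gamma_{ij}^ke_k,\qquad e_i\vdash e_j=\sum_{k=1}^3\delta_{ij}^ke_k,\qquad e_i\bot e_j=\sum_{k=1}^3\phi_{ij}^ke_k,$$
so that a triassociative structure is encoded by the $81$ scalars $\gamma_{ij}^k,\delta_{ij}^k,\phi_{ij}^k$. Imposing the eleven axioms \eqref{eq6}--\eqref{eq16} yields exactly the polynomial system of the Lemma evaluated for $i,j,q\in\{1,2,3\}$, cutting out the subvariety of triassociative laws inside $\mathbb{K}^{81}$ whose $GL_3(\mathbb{R})$-orbits we must enumerate.

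Next I would solve the system by exploiting the layered form of the axioms rather than attacking all $81$ constants at once. Axiom \eqref{eq16} alone says that $(\mathcal{T},\bot)$ is an associative algebra, so I would first fix a representative of each three-dimensional associative structure for $\bot$; in parallel, \eqref{eq6}--\eqref{eq10} force $(\mathcal{T},\dashv,\vdash)$ to be an associative dialgebra, for which a classification is available. For each compatible pair I would then solve the mixed axioms \eqref{eq11}--\eqref{eq15} coupling the dialgebra product to $\bot$, and finally remove the surviving free parameters using the residual change-of-basis freedom in $GL_3(\mathbb{R})$ that fixes the chosen $\bot$. This is the three-dimensional analogue of extracting $\alpha_1=1$, $\alpha_2=0$, $\beta_1=1$, $\beta_2=0$ in the proof of Theorem~\ref{the1}, carried out branch by branch.

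Finally I would certify pairwise non-isomorphism by attaching to each trialgebra a vector of $GL_3(\mathbb{R})$-invariants: the isomorphism type of the associated associative algebra $(\mathcal{T},\ast)$ furnished by the Proposition, the dimensions of the images and of the left and right annihilators of each of $\dashv,\vdash,\bot$, and the dimension of the derivation algebra computed in Section~4; two algebras with different invariant vectors cannot share a $GL_3(\mathbb{R})$-orbit. The main obstacle is not any single deduction but the combinatorial explosion: the number of compatible $(\dashv,\vdash,\bot)$-triples is far larger than in dimension two, and the mixed axioms \eqref{eq11}--\eqref{eq15} create many branch points, so the bulk of the work is an organised exhaustive case analysis with redundant parameters eliminated by basis changes. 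Quotienting out the $GL_3(\mathbb{R})$-action as early as possible, and discarding the purely associative solutions immediately, is what keeps the case tree, and hence the computation, tractable.
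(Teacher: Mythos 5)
Your proposal is correct in outline and is the same method in kind as the paper's: both encode the three products by structure constants, impose the axioms as a polynomial system, anchor the case analysis on a previously classified product, and reduce by the $GL_3$ action. The difference is in the anchoring and in what gets verified. The paper fixes a representative of the left product $\dashv$ (e.g.\ $e_2\dashv e_1=e_3$, $e_2\dashv e_2=e_3$), writes $\vdash$ and $\bot$ with undetermined coefficients, and extracts the coefficients from the axioms exactly as in Theorem~\ref{the1}; you instead fix $\bot$ as an associative algebra via \eqref{eq16} and $(\dashv,\vdash)$ as a dialgebra via \eqref{eq6}--\eqref{eq10}, and only then solve the coupling axioms \eqref{eq11}--\eqref{eq15}. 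Your decomposition buys a smaller, better-organised case tree because two of the three products are taken from existing classifications rather than rediscovered, at the cost of having to intersect two residual symmetry groups when eliminating parameters. More importantly, you supply a non-isomorphism argument (comparing $GL_3$-invariants such as the type of $(\mathcal{T},\ast)$, annihilator dimensions, and $\dim Der$) that the paper's proof omits entirely; without some such step the phrase ``pairwise non-isomorphic'' in the statement is unverified, so this addition is not cosmetic. Both versions remain sketches in the sense that neither actually exhibits the exhaustive branch-by-branch computation, but yours is the more complete plan.
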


${Trias}_{3}^1$ :	
$\begin{array}{ll}
e_1\dashv e_2=e_3,\\
e_2\dashv e_1=e_3,\\
\end{array}\,\,$
$\begin{array}{ll}
e_2\dashv e_3=e_3,\\
e_1\vdash e_2=e_3,\\
\end{array}\quad$
$\begin{array}{ll}
e_2\vdash e_2=e_3,
e_1\bot e_1=e_3,\\
\end{array}$
$\begin{array}{ll}
e_1\bot e_2=e_3,\\
e_2\bot e_2=e_3.
\end{array}$

${Trias}_{3}^2$ :	
$\begin{array}{ll}
e_1\dashv e_2=e_3,\\
e_2\dashv e_1=e_3,\\
e_2\dashv e_3=e_3,\\
\end{array}\,\,$
$\begin{array}{ll}
e_1\vdash e_2=e_3,\\
e_2\vdash e_1=e_3,\\
\end{array}\quad$
$\begin{array}{ll}
e_2\vdash e_2=e_3\\
e_1\bot e_1=e_3,\\
\end{array}$
$\begin{array}{ll}
e_1\bot e_2=e_3,\\
e_2\bot e_2=e_3.
\end{array}$

${Trias}_{3}^3$ :	
$\begin{array}{ll}
e_2\dashv e_2=e_1,\\
\end{array}\,\,$
$\begin{array}{ll}
e_2\vdash e_2=e_1,\\
\end{array}\quad$
$\begin{array}{ll}
e_2\bot e_2=e_3,\\
e_2\bot e_3=e_1+e_3.
\end{array}$

${Trias}_{3}^4$ :	
$\begin{array}{ll}
e_3\dashv e_3=e_1,\\
\end{array}\,\,$
$\begin{array}{ll}
e_3\vdash e_3=e_1,\\
\end{array}\quad$
$\begin{array}{ll}
e_3\bot e_2=e_1+e_2,\\
e_3\bot e_3=e_1+e_2.
\end{array}$

${Trias}_{3}^5$ :	
$\begin{array}{ll}
e_2\dashv e_2=e_1+e_3,
\end{array}\,\,$
$\begin{array}{ll}
e_2\vdash e_2=e_1+e_3,
\end{array}\quad$
$\begin{array}{ll}
e_2\bot e_2=e_1+e_2.
\end{array}$

${Trias}_{3}^6$ :	
$\begin{array}{ll}
e_1\dashv e_3=e_2,\\
e_3\dashv e_1=e_2,\\
e_3\dashv e_3=e_2,\\
\end{array}\,\,$
$\begin{array}{ll}
e_1\vdash e_1=e_2,\\
e_1\vdash e_3=e_2,\\
\end{array}\quad$
$\begin{array}{ll}
e_3\vdash e_1=e_2,\\
e_3\vdash e_3=e_2,\\
\end{array}$
$\begin{array}{ll}
e_3\bot e_1=e_2,\\
e_3\bot e_3=e_2.\\
\end{array}$\\

${Trias}_{3}^7$ :	
$\begin{array}{ll}
e_1\dashv e_1=e_2+e_3,\\
\end{array}\,\,$
$\begin{array}{ll}
e_1\vdash e_1=e_2+e_3,\\
\end{array}\quad$
$\begin{array}{ll}
e_1\bot e_1=e_2+e_3.
\end{array}$\\

${Trias}_{3}^8$ :	
$\begin{array}{ll}
e_2\dashv e_2=e_1,\\
e_2\dashv e_3=e_1,\\
e_3\dashv e_2=e_1,\\
\end{array}\,\,$
$\begin{array}{ll}
e_3\dashv e_3=e_1,\\
e_2\vdash e_2=e_1,\\
e_2\vdash e_3=e_1,\\
\end{array}\quad$
$\begin{array}{ll}
e_3\vdash e_2=e_1,\\
e_2\bot e_2=e_1,\\
\end{array}$
$\begin{array}{ll}
e_2\bot e_3=e_1,\\
e_3\bot e_2=e_1.\\
\end{array}$\\

${Trias}_{3}^9$ :	
$\begin{array}{ll}
e_2\dashv e_2=e_3,
\end{array}\,\,$
$\begin{array}{ll}
e_2\vdash e_2=e_3,\\
\end{array}\quad$
$\begin{array}{ll}
e_2\bot e_1=e_1+e_3,\\
e_2\bot e_2=e_1+e_3.\\
\end{array}$\\

${Trias}_{3}^{10}$ :	
$\begin{array}{ll}
e_2\dashv e_2=e_1+e_3,\\
\end{array}\,\,$
$\begin{array}{ll}
e_2\vdash e_2=e_1+e_3,\\
\end{array}\quad$
$\begin{array}{ll}
e_2\bot e_2=e_1+e_3.
\end{array}$\\

${Trias}_{3}^{11}$ :	
$\begin{array}{ll}
e_2\dashv e_1=e_3,\\
e_2\dashv e_2=e_3,\\
\end{array}\,\,$
$\begin{array}{ll}
e_2\vdash e_1=e_3,\\
e_2\vdash e_2=e_3,
\end{array}\quad$
$\begin{array}{ll}
e_2\bot e_1=e_3,\\
e_2\bot e_2=e_3.
\end{array}$

${Trias}_{3}^{12}$ :	
$\begin{array}{ll}
e_1\dashv e_1=e_3,\\
e_1\dashv e_2=e_3,\\
\end{array}\,\,$
$\begin{array}{ll}
e_2\dashv e_1=e_3,\\
e_1\vdash e_2=e_3,
\end{array}\quad$
$\begin{array}{ll}
e_2\vdash e_1=e_3,\\
e_2\vdash e_2=e_3.
\end{array}$
$\begin{array}{ll}
e_2\bot e_1=e_3,\\
e_2\bot e_2=e_3.
\end{array}$

\begin{proof}
 Let $\mathcal{T}$ be a three-dimensional vector space. To determine a triassociative algebra structure on $\mathcal{T}$ , we consider $\mathcal{T}$ with respect to one associative trialgebra operation.
Let $\mathcal{B}=(\mathcal{T}, \dashv)$ be the algebra\\
$$e_2 \dashv e_1 = e_3, \: e_2 \dashv e_2 = e_3$$
The multiplication operations $\vdash,\bot \:in\: \mathcal{T}$. We use the same method of the Proof of the Theorem \ref{the1}.

Then $\mathcal{B}=(\mathcal{T}, \dashv)$ it is isomorphic to ${Trias}_3^{11}$. The other associative trialgebras of the list of Theorem \ref{the2}
can be obtained by minor modification of the observation above.
\end{proof}

\begin{theorem}\label{the3}
 Any 4-dimensional real associative trialgebra either is associative or isomorphic to one of the following pairwise non-isomorphic associative tialgebras:
\end{theorem}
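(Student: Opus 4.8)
The plan is to follow the same strategy used for Theorem~\ref{the1} and Theorem~\ref{the2}, now carried out in dimension four. First I would fix a four-dimensional $\mathbb{K}$-vector space $\mathcal{T}$ with basis $\{e_1,e_2,e_3,e_4\}$ and record the three products through their structure constants,
\[
e_i\dashv e_j=\sum_{k=1}^{4}\gamma_{ij}^{k}e_k,\qquad e_i\vdash e_j=\sum_{k=1}^{4}\delta_{ij}^{k}e_k,\qquad e_i\bot e_j=\sum_{k=1}^{4}\phi_{ij}^{k}e_k .
\]
By the preceding Lemma a triassociative structure is then exactly a common solution of the component-wise form of the eleven axioms \eqref{eq6}--\eqref{eq16}, i.e.\ a point of the affine subvariety $\mathrm{Trias}_4\subset\mathbb{K}^{3\cdot 4^{3}}$, and two such points give isomorphic trialgebras precisely when they lie in the same orbit of the transport-of-structure action of $GL_4(\mathbb{K})$.

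To make the case analysis tractable I would exploit the layered form of the axioms rather than attack all $192$ constants at once. By definition $(\mathcal{T},\dashv,\vdash)$ is an associative dialgebra, $(\mathcal{T},\bot)$ is an associative algebra, and \eqref{eq11}--\eqref{eq15} couple the two pieces. So, exactly as in the explicit computations above, I would anchor the reduction on a convenient normal form for $\dashv$, propagate the constraints coming from \eqref{eq6}--\eqref{eq10} to pin down $\vdash$, and finally use the mixed axioms \eqref{eq11}--\eqref{eq15} together with associativity of $\bot$ in \eqref{eq16} to determine the $\phi_{ij}^{k}$. Each instance $(e_a\ast e_b)\ast e_c=e_a\ast(e_b\ast e_c)$ of an axiom produces linear and then quadratic relations collapsing most constants to $0$ or $1$, and the surviving free parameters $a,b,\dots$ are the ones that label the families in the list. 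The remaining reduction is modulo $GL_4(\mathbb{K})$: a change of basis acts simultaneously on $\gamma,\delta,\phi$, so I would normalise the free parameters wherever a scaling or triangular change of basis permits, thereby extracting the canonical representatives displayed in the statement, and retain only the essential parameters that no basis change can remove.

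The hard part is twofold. Computationally, the polynomial system in dimension four is an order of magnitude larger than in dimensions two and three, so the split into cases according to the rank and the nilpotency type of the three multiplications is the genuine bottleneck; a Gr\"obner-basis computation (or an equivalent computer-algebra reduction) is effectively unavoidable here, and one must argue completeness carefully so that no $GL_4$-orbit is overlooked. Conceptually, the delicate point is establishing \emph{pairwise non-isomorphism} of the resulting normal forms: since two quite different-looking structure-constant arrays can still be $GL_4$-equivalent, I would separate the classes by isomorphism invariants — the dimensions of the left, right and middle annihilators, the dimensions of the images of $\dashv,\vdash,\bot$, the isomorphism type of the associated associative algebra $(\mathcal{T},\ast)$ from the earlier Proposition, and ultimately the derivation and centroid data computed in Sections~4 and~5 — and verify that these invariants already distinguish all the listed families.
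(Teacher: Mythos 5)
Your proposal follows essentially the same route as the paper's own proof: anchor the computation on a normal form for $\dashv$ drawn from the underlying associative/dialgebra classification, write generic structure constants for $\vdash$ and $\bot$, impose the eleven trialgebra axioms componentwise to collapse the coefficients, and reduce the surviving parameters modulo the transport-of-structure action of $GL_4(\mathbb{K})$. If anything you are more explicit than the paper, whose proof treats only the single case leading to ${Trias}_4^4$ and does not address the completeness of the case analysis or the pairwise non-isomorphism of the listed representatives, both of which your plan correctly identifies as the substantive work.
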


${Trias}_{4}^1$ :	
$\begin{array}{ll}
e_1\dashv e_1=e_2+e_4,\\
e_1\dashv e_3=e_2+e_4,\\
\end{array}\,\,$
$\begin{array}{ll}
e_3\dashv e_1=e_4,\\
e_1\vdash e_1=e_2+e_4,\\
\end{array}\quad$
$\begin{array}{ll}
e_1\vdash e_3=e_2+e_4,\\
e_3\vdash e_1=e_4,\\
\end{array}$
$\begin{array}{ll}
e_1\bot e_1=e_2+e_4,\\
e_1\bot e_3=e_4,\\
e_3\bot e_3=e_2,\\
\end{array}$\\

${Trias}_{4}^2$ :	
$\begin{array}{ll}
e_1\dashv e_1=e_2+e_4,\\
e_1\dashv e_3=e_2+e_4,\\
e_3\dashv e_1=e_2+e_4,\\
\end{array}\,\,$
$\begin{array}{ll}
e_1\vdash e_1=e_2+e_4,\\
e_1\vdash e_3=e_2+e_4,\\
e_3\vdash e_1=e_2+e_4,\
\end{array}\quad$
$\begin{array}{ll}
e_1\bot e_1=e_2+e_4,\\
e_1\bot e_3=e_2+e_4,\\
\end{array}$
$\begin{array}{ll}
e_3\bot e_1=e_2+e_4,\\
e_3\bot e_3=e_2.\\
\end{array}$\\

${Trias}_{4}^3$ :	
$\begin{array}{ll}
e_1\dashv e_1=e_2+e_4,\\
e_1\dashv e_3=e_2+e_4,\\
e_3\dashv e_1=e_2+e_4,\\
\end{array}\,\,$
$\begin{array}{ll}
e_1\vdash e_1=e_2+e_4,\\
e_1\vdash e_3=e_2+e_4,\\
\end{array}\quad$
$\begin{array}{ll}
e_3\vdash e_1=e_2+e_4,\\
e_1\bot e_1=e_2+e_4,\\
\end{array}$
$\begin{array}{ll}
e_1\bot e_3=e_2+e_4,\\
e_3\bot e_3=e_4.
\end{array}$\\

${Trias}_{4}^4$ :	
$\begin{array}{ll}
e_1\dashv e_2=e_4,\\
e_2\dashv e_1=e_4,\\
\end{array}\,\,$
$\begin{array}{ll}
e_2\dashv e_2=e_4,\\
e_2\vdash e_1=e_4,\\
\end{array}\quad$
$\begin{array}{ll}
e_2\vdash e_2=e_4,\\
e_3\vdash e_1=e_4,\\
\end{array}$
$\begin{array}{ll}
e_1\bot e_2=e_4,\\
e_2\bot e_1=e_4,\\
\end{array}$
$\begin{array}{ll}
e_1\bot e_2=e_4,\\
e_2\bot e_1=e_4,\\
\end{array}$\\

${Trias}_{4}^5$ :	
$\begin{array}{ll}
e_1\dashv e_2=e_4,\\
e_2\dashv e_1=e_4,\\
\end{array}\,\,$
$\begin{array}{ll}
e_2\dashv e_2=e_4,\\
e_2\vdash e_1=e_4,\\
\end{array}\quad$
$\begin{array}{ll}
e_2\vdash e_2=e_4,\\
e_3\vdash e_1=e_4,\\
\end{array}$
$\begin{array}{ll}
e_1\bot e_1=e_4,\\
e_2\bot e_1=e_4,\\
\end{array}$
$\begin{array}{ll}
e_3\bot e_3=e_4.
\end{array}$\\

${Trias}_{4}^6$ :	
$\begin{array}{ll}
e_3\dashv e_4=e_1+e_2,\\
e_4\dashv e_3=e_1+e_2,\\
\end{array}\,\,$
$\begin{array}{ll}
e_4\dashv e_4=e_1+e_2,\\
e_3\vdash e_4=e_1+e_2,\\
\end{array}\quad$
$\begin{array}{ll}
e_4\vdash e_3=e_1+e_2,\\
e_4\vdash e_4=e_1+e_2,\\
\end{array}$
$\begin{array}{ll}
e_3\bot e_4=e_1+e_2,\\
e_4\bot e_3=e_1+e_2,\\
e_4\bot e_4=e_1+e_2.
\end{array}$

${Trias}_{4}^7$ :	
$\begin{array}{ll}
e_2\dashv e_2=e_1+e_3,\\
e_2\dashv e_4=e_1+e_3,\\
\end{array}\,\,$
$\begin{array}{ll}
e_4\dashv e_2=e_1+e_3,\\
e_2\vdash e_2=e_1+e_3,\\
\end{array}\quad$
$\begin{array}{ll}
e_2\vdash e_4=e_1+e_3,\\
e_4\vdash e_2=e_1+e_3,\\
e_2\bot e_2=e_1+e_3,\\
\end{array}$
$\begin{array}{ll}
e_2\bot e_4=e_1+e_3,\\
e_4\bot e_2=e_1+e_3.\\
\end{array}$

${Trias}_{4}^8$ :	
$\begin{array}{ll}
e_2\dashv e_2=e_1+e_3,\\
e_2\dashv e_4=e_1+e_3,\\
\end{array}\,\,$
$\begin{array}{ll}
e_4\dashv e_2=e_1+e_3,\\
e_4\dashv e_4=e_1+e_3,\\
e_2\vdash e_2=e_1+e_3,\\
\end{array}\quad$
$\begin{array}{ll}
e_2\vdash e_4=e_1+e_3,\\
e_4\vdash e_2=e_1+e_3,\\
e_2\bot e_2=e_1+e_3,\\
\end{array}$
$\begin{array}{ll}
e_2\bot e_4=e_1+e_3,\\
e_4\bot e_4=e_1+e_3.\\
\end{array}$

${Trias}_{4}^9$ :	
$\begin{array}{ll}
e_2\dashv e_2=e_1+e_3,\\
e_2\dashv e_4=e_1+e_3,\\
e_4\dashv e_2=e_1+e_3,\\
\end{array}\,\,$
$\begin{array}{ll}
e_4\dashv e_4=e_1+e_3,\\
e_2\vdash e_2=e_1+e_3,\\
e_2\vdash e_4=e_1+e_3,\\
\end{array}\quad$
$\begin{array}{ll}
e_4\vdash e_2=e_1+e_3,\\
e_4\vdash e_4=e_3,\\
e_2\bot e_2=e_1+e_3,\\
\end{array}$
$\begin{array}{ll}
e_2\bot e_4=e_1+e_3,\\
e_4\bot e_2=e_1+e_3,\\
e_4\bot e_4=e_1
\end{array}$

${Trias}_{4}^{10}$ :	
$\begin{array}{ll}
e_2\dashv e_2=e_1+e_3,\\
e_2\dashv e_4=e_1+e_3,\\
\end{array}\quad$
$\begin{array}{ll}
e_4\dashv e_4=e_1+e_3,\\
e_2\vdash e_2=e_1+e_3,\\
\end{array}$
$\begin{array}{ll}
e_4\vdash e_2=e_1+e_3,\\
e_4\vdash e_4=e_3,\\
\end{array}$
$\begin{array}{ll}
e_2\bot e_4=e_1+e_3,\\
e_4\bot e_2=e_1+e_3,\\
e_4\bot e_4=e_1.
\end{array}$

${Trias}_{4}^{11}$ :	
$\begin{array}{ll}
e_2\dashv e_2=e_1+e_3,\\
e_2\dashv e_4=e_1+e_3,\\
\end{array}\quad$
$\begin{array}{ll}
e_4\dashv e_2=e_1+e_3,\\
e_2\vdash e_2=e_1+e_3,\\
\end{array}$
$\begin{array}{ll}
e_2\vdash e_4=e_1+e_3,\\
e_4\vdash e_2=e_1+e_3,\\
\end{array}$
$\begin{array}{ll}
e_2\bot e_2=e_1+e_3,\\
e_2\bot e_4=e_1+e_3,\\
e_4\bot e_2=e_1+e_3.\\
\end{array}$

${Trias}_{4}^{12}$ :	
$\begin{array}{ll}
e_2\dashv e_2=e_1+e_3,\\
e_2\dashv e_4=e_1+e_3,\\
e_4\dashv e_2=e_1+e_3,\\
\end{array}\quad$
$\begin{array}{ll}
e_4\dashv e_4=e_1+e_3,\\
e_2\vdash e_2=e_1+e_3,\\
\end{array}$
$\begin{array}{ll}
e_2\vdash e_4=e_1+e_3,\\
e_4\vdash e_2=e_1+e_3,\\
\end{array}$
$\begin{array}{ll}
e_2\bot e_2=e_1+e_3,\\
e_2\bot e_4=e_1+e_3,\\
e_4\bot e_2=e_1+e_3.\\
\end{array}$

${Trias}_{4}^{13}$ :	
$\begin{array}{ll}
e_2\dashv e_1=e_4,\\
e_2\dashv e_2=e_4,\\
\end{array}\quad$
$\begin{array}{ll}
e_3\dashv e_3=e_4,\\
e_1\vdash e_3=e_4,\\
\end{array}$
$\begin{array}{ll}
e_2\vdash e_2=e_4,\\
e_3\vdash e_1=e_4,\\
\end{array}$
$\begin{array}{ll}
e_1\bot e_1=e_4,\\
e_1\bot e_3=e_4,\\
\end{array}$
$\begin{array}{ll}
e_3\bot e_3=e_4.
\end{array}$\\

${Trias}_{4}^{14}$ :	
$\begin{array}{ll}
e_2\dashv e_2=e_1+e_3,\\
e_2\dashv e_4=e_1+e_3,\\
e_4\dashv e_2=e_1+e_3,\\
\end{array}\,\,$
$\begin{array}{ll}
e_4\dashv e_4=e_1+e_3,\\
e_2\vdash e_2=e_1+e_3,\\
e_2\vdash e_4=e_1+e_3,\\
\end{array}\quad$
$\begin{array}{ll}
  e_4\vdash e_2=e_1+e_3,\\
e_4\vdash e_4=e_3,\\
e_2\bot e_2=e_1+e_3,\\
\end{array}$
$\begin{array}{ll}
e_2\bot e_4=e_1+e_3,\\
e_4\bot e_2=e_1+e_3,\\
e_4\bot e_4=e_1.
\end{array}$

${Trias}_{4}^{15}$ :	
$\begin{array}{ll}
e_2\dashv e_1=e_3,\\
e_2\dashv e_2=e_3,\\
e_4\dashv e_1=e_3,\\
\end{array}\,\,$
$\begin{array}{ll}
e_4\dashv e_2=e_3,\\
e_1\vdash e_1=e_3,\\
e_1\vdash e_4=e_3,\\
\end{array}\quad$
$\begin{array}{ll}
  e_2\vdash e_1=e_3,\\
e_2\vdash e_4=e_3,\\
e_2\bot e_1=e_3,\\
\end{array}$
$\begin{array}{ll}
e_2\bot e_2=e_3,\\
e_4\bot e_1=e_3,\\
e_4\bot e_4=e_3.
\end{array}$

${Trias}_{4}^{16}$ :	
$\begin{array}{ll}
e_1\dashv e_1=e_2+e_4,\\
e_3\dashv e_1=e_2+e_4,\\
e_3\dashv e_3=e_2+e_4,\\
\end{array}\,\,$
$\begin{array}{ll}
  e_1\vdash e_1=e_2+e_4,\\
e_1\vdash e_3=e_2+e_4,\\
e_3\vdash e_1=e_2+e_4,\\
\end{array}\quad$
$\begin{array}{ll}
  e_3\vdash e_3=e_4,\\
e_1\bot e_1=e_2,\\
e_1\bot e_3=e_4,\\
\end{array}$
$\begin{array}{ll}
e_3\bot e_1=e_2,\\
e_3\bot e_3=e_2+e_4.
\end{array}$

\begin{proof}
 Let $\mathcal{T}$ be a three-dimensional vector space. To determine a triassociative algebra structure on $\mathcal{T}$ , we consider $\mathcal{T}$ with
respect to one associative trialgebra operation.
Let $\mathcal{C}=(\mathcal{T}, \dashv)$ be the algebra\\
$$e_1 \dashv e_2 = e_4, \: e_2 \dashv e_1 = e_4,e_2 \dashv e_2 = e_4.$$
The multiplication operations $\vdash,\bot \:in\: \mathcal{T}$.  We use the same method of the Proof of the Theorem \ref{the1}.

Then $\mathcal{C}=(\mathcal{T}, \dashv)$ it is isomorphic to ${Trias}_4^4$. The other associative trialgebras of the list of Theorem \ref{the3} can be obtained by
minor modification of the observation.
\end{proof}

\section{Derivations of low-dimensional associative trialgebras}
\begin{definition}\label{dia2}
A derivation of the associative trialgebras $\mathcal{T}$ is a linear transformation : $\mathcal{D} : \mathcal{T} \rightarrow \mathcal{T}$ satisfying
\begin{eqnarray}
d(x\dashv  y)=d(x)\dashv  y+x\dashv d(y)\\
d(x\vdash y)=d(x)\vdash y+x \vdash d(y)\\
d(x\bot y)=d(x)\bot y+x\bot d(y)
\end{eqnarray}
for all $x, y\in \mathcal{T}.$
 \end{definition}
The set of all derivations of $\mathcal{T}$ denoted by $Der\mathcal{D}$. It is clear that  $Der\mathcal{D}$ is a linear subspace of
$End\mathcal{D}.$

The coefficients of the above linear combinations $\left\{\mathcal{\gamma}_{ij}^k, \mathcal{\delta}_{rs}^t , \mathcal{\phi}_{pl}^m\right\}$
are called the structure constants of $\mathcal{T}$ on the basis $\left\{e_1, e_2,\ldots, e_n\right\}.$

A derivation being a linear transformation of the vector space $\mathcal{T}$ is represented in a matrix form  $\left[d_{ij}\right]_{ij=1,2\ldots,n}$ \,i.e\,
$\mathcal{D}(e_i)=\sum_{j=1}^nd_{ji}e_j\quad i=1,2,\ldots,n.$

According to the definition of the derivation, the entries $d_{ji}e_j\quad i=1,,2,\ldots,n$, of the matrix  $\left[d_{ij}\right]_{ij=1,2\ldots,n}$ must satisfy
the following systems of equations :

\begin{equation}\label{eq2}
\left\{\begin{array}{c}
\begin{array}{ll}
\sum_{q=1}^n(\gamma_{ij}^kd_{qk}- d_{ki}\gamma_{kj}^q- d_{kj}\gamma_{ik}^q)=0,\quad i,j,q\in \left\{1,n\right\}\\
\sum_{q=1}^n(\delta_{ij}^kd_{qk}- d_{ki}\delta_{kj}^q- d_{kj}\delta_{ik}^q)=0,\quad i,j,q\in \left\{1,n\right\}\\
\sum_{q=1}^n(\phi_{ij}^kd_{qk}-  d_{ki}\phi_{kj}^q- d_{kj}\phi_{ik}^q)=0,\quad i,j,q\in \left\{1,n\right\}.
\end{array}
\end{array}\right.
\end{equation}

\begin{theorem}
The derivations of two-dimensional associative trialgebras has the following form :
\end{theorem}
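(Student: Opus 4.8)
The statement lists, for each of the eight isomorphism classes ${Trias}_2^1,\dots,{Trias}_2^8$ produced by Theorem \ref{the1}, the explicit form of its derivation algebra, so the proof is a case-by-case computation carried out uniformly across the classes. The plan is to fix one class at a time, write the unknown derivation as a $2\times2$ matrix, impose the three Leibniz-type conditions of Definition \ref{dia2}, and solve the resulting linear system in the matrix entries.

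First I would record the general ansatz. A linear map $d:\mathcal{T}\to\mathcal{T}$ on the two-dimensional space is determined by
$$d(e_1)=d_{11}e_1+d_{21}e_2,\qquad d(e_2)=d_{12}e_1+d_{22}e_2,$$
so there are four unknowns $d_{11},d_{12},d_{21},d_{22}$. For a fixed algebra I would run through every nonzero product $e_i\dashv e_j$, $e_i\vdash e_j$, $e_i\bot e_j$ appearing in its defining table and impose $d(e_i\,\circ\,e_j)=d(e_i)\,\circ\,e_j+e_i\,\circ\,d(e_j)$ for the corresponding operation $\circ$. Expanding both sides by bilinearity, substituting the structure constants, and comparing the coefficients of $e_1$ and $e_2$ yields a system of linear equations in the four unknowns. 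For instance, for ${Trias}_2^6$ the relation coming from $e_1\dashv e_1=e_1$ gives $d_{11}e_1+d_{21}e_2=2d_{11}e_1+d_{21}e_2$, forcing $d_{11}=0$, while the remaining products $e_2\dashv e_1$, $e_1\vdash e_1$, $e_1\bot e_1$ add no new constraint, so the derivation is the matrix with $d_{11}=0$ and $d_{12},d_{21},d_{22}$ free. Each of the other classes is handled by the same substitution-and-collection scheme, which is exactly the linear system in the entries $d_{ij}$ recorded just before the theorem.

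Solving the eight systems and reading off the free parameters then produces the announced list. The computation is entirely routine but voluminous; the only points requiring genuine care are consistency across the three operations, since a single $d$ must simultaneously satisfy all three conditions of Definition \ref{dia2} rather than one at a time, and the parameter-dependent classes ${Trias}_2^1$, ${Trias}_2^2$ and ${Trias}_2^8$, where one must track how the solution set depends on the structure parameters $a,b$ (in particular whether a relation such as $a\,d_{11}=0$ forces $d_{11}=0$ or is vacuous when the parameter degenerates). I expect this bookkeeping, rather than any conceptual difficulty, to be the main obstacle, since every individual equation is linear and resolves immediately once the products have been expanded.
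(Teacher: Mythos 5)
Your overall strategy coincides with the paper's: represent $d$ by its matrix $[d_{ij}]$, impose the three Leibniz identities of Definition \ref{dia2}, and solve the resulting linear system (this is exactly the system (\ref{eq2}) the paper writes down and then applies class by class). However, your execution contains a genuine error that would produce wrong answers. You propose to "run through every \emph{nonzero} product" in the defining table, but the Leibniz condition must be imposed on \emph{every} pair of basis vectors, including those whose product vanishes: if $e_i\circ e_j=0$ the identity still forces $d(e_i)\circ e_j+e_i\circ d(e_j)=0$, which is a nontrivial linear constraint because the two terms on the right involve \emph{other} products that need not vanish. The system (\ref{eq2}) in the paper ranges over all $i,j$, precisely for this reason.

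Your worked example already exhibits the failure. For ${Trias}_2^6$ (with $e_1\dashv e_1=e_1$, $e_2\dashv e_1=e_2$, $e_1\vdash e_1=e_1$, $e_1\bot e_1=e_1$ and all other products zero) you conclude that $d_{12},d_{21},d_{22}$ are free, i.e.\ a three-dimensional derivation algebra. In fact the $\vdash$-identity at $(e_1,e_1)$ gives $d_{11}e_1+d_{21}e_2=d(e_1)\vdash e_1+e_1\vdash d(e_1)=2d_{11}e_1$ (since $e_2\vdash e_1=0$ here, unlike $e_2\dashv e_1$), forcing $d_{21}=0$ in addition to $d_{11}=0$; and the zero product $e_1\vdash e_2=0$ gives $0=d(e_1)\vdash e_2+e_1\vdash d(e_2)=d_{12}e_1$, forcing $d_{12}=0$. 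The correct answer is the one in the paper's table, $\left(\begin{smallmatrix}0&0\\0&d_{22}\end{smallmatrix}\right)$, of dimension one. So the claim that the "remaining products add no new constraint" is false, both because you overlooked constraints coming from vanishing products and because the same pair $(e_i,e_j)$ can yield different constraints for $\dashv$, $\vdash$ and $\bot$ when the three multiplication tables differ. Once you impose all $3n^2$ identities (for all pairs and all three operations), the rest of your plan is sound and agrees with the paper's method.
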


\begin{tabular}{||c||c||c||c||c||c||c||c||c||c||c||c||}
\hline
IC&Der$(\mathcal{D})$ &$Dim(\mathcal{D})$&IC&Der$(\mathcal{D})$&$Dim(\mathcal{D})$\\
			\hline
${Trias}_2^3$&
$\left(\begin{array}{cccc}
d_{11}&0\\
0&0
\end{array}
\right)$
&
1
&
${Trias}_2^7$&
$\left(\begin{array}{cccc}
d_{11}&0\\
0&d_{11}
\end{array}
\right)$
&
1
\\ \hline
${Trias}_2^4$&
$\left(\begin{array}{cccc}
d_{11}&0\\
-d_{11}&0
\end{array}
\right)$
&
1
&
${Trias}_2^8$&
$\left(\begin{array}{cccc}
0&d_{21}\\
0&-\frac{a-b}{b}d_{21}
\end{array}
\right)$
&
1
\\ \hline
${Trias}_2^6$&
$\left(\begin{array}{cccc}
0&0\\
0&d_{22}
\end{array}
\right)$
&
1
&
&
&
\\ \hline
\end{tabular}

\vspace{.2in}
\begin{theorem}\label{dthieo1}
The derivations of three-dimensional associative trialgebras has the following form :
\end{theorem}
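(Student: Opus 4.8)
The plan is to compute, for each of the twelve isomorphism classes ${Trias}_3^1,\dots,{Trias}_3^{12}$ listed in Theorem \ref{the2}, the derivation space directly from Definition \ref{dia2}, and then assemble the results into the table. First I would fix a representative $\mathcal{T}={Trias}_3^m$ together with its three multiplication tables in the basis $\{e_1,e_2,e_3\}$ and encode an arbitrary linear endomorphism by the matrix $[d_{ij}]$ via $d(e_i)=\sum_{k=1}^{3} d_{ki}e_k$. Since the three products are bilinear and $d$ is linear, the Leibniz rule
$$d(e_i\,\ast\,e_j)=d(e_i)\,\ast\,e_j+e_i\,\ast\,d(e_j),\qquad \ast\in\{\dashv,\vdash,\bot\},$$
need only be checked on the basis pairs $(e_i,e_j)$; this is precisely the system \eqref{eq2} specialized to the structure constants of ${Trias}_3^m$.

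For each class I would then substitute the listed nonzero products into these identities, expand both sides in the basis, and equate the coefficients of $e_1,e_2,e_3$. This produces a homogeneous linear system in the nine unknowns $d_{ij}$; solving it shows which entries are free, which vanish, and which satisfy a forced linear relation (of the $-\tfrac{a-b}{b}$ type already appearing in the two-dimensional table), and the number of free parameters gives the stated $\dim(\mathcal{D})$. I would record the resulting general matrix for each isomorphism class, being careful to impose also the constraints coming from the \emph{vanishing} products: whenever $e_i\ast e_j=0$ for one of the three operations, the derivation condition forces $d(e_i)\ast e_j+e_i\ast d(e_j)=0$, and these relations typically eliminate several off-diagonal entries.

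The main difficulty here is bookkeeping rather than conceptual depth: there are twelve algebras, each carrying three multiplication tables, and the constraints arising from the zero products are easy to overlook yet decisive for obtaining the correct dimensions. As a preliminary simplification I would exploit the inclusion $\mathrm{Der}(\mathcal{T})\subseteq\mathrm{Der}(\mathcal{T},\ast)$ into the derivation algebra of the associated associative algebra $(\mathcal{T},\ast)$ with $x\ast y=x\dashv y+x\vdash y-x\bot y$ from the Proposition, which narrows the candidate matrices before the full three-operation system is imposed. As a final cross-check I would verify that each computed space is closed under the commutator $[d,d']=dd'-d'd$, since $\mathrm{Der}(\mathcal{T})$ is necessarily a Lie subalgebra of $\mathrm{End}(\mathcal{T})$.
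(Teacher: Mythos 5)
Your proposal is correct and follows essentially the same route as the paper: both reduce the Leibniz rule for $\dashv,\vdash,\bot$ on basis pairs to the homogeneous linear system \eqref{eq2} in the entries $d_{ij}$ and solve it class by class, the paper carrying out only the case ${Trias}_3^{3}$ explicitly and asserting the rest are analogous. Your additional filters --- the inclusion $\mathrm{Der}(\mathcal{T})\subseteq\mathrm{Der}(\mathcal{T},\ast)$ and the closure of the solution space under the commutator --- are useful sanity checks but do not change the method.
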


\begin{tabular}{||c||c||c||c||c||c||c||c||c||c||c||c||}
\hline
IC&Der$(\mathcal{D})$ &$Dim(\mathcal{D})$&IC&Der$(\mathcal{D})$&$Dim(\mathcal{D})$\\
			\hline
${Trias}_3^3$&
$\left(\begin{array}{ccc}
0&0&0\\
d_{21}&0&d_{13}\\
0&0&d_{13}
\end{array}
\right)$
&
2
&
${Trias}_3^9$&
$\left(\begin{array}{ccc}
d_{11}&0&d_{11}\\
d_{21}&d_{23}&0\\
0&0&0
\end{array}
\right)$
&
3
\\ \hline
${Trias}_3^5$
&
$\left(\begin{array}{ccc}
d_{11}&0&d_{13}\\
-d_{11}&0&-d_{13}\\
-d_{11}&0&-d_{13}
\end{array}
\right)$
&
2
&
${Trias}_3^{10}$&
$\left(\begin{array}{ccc}
d_{11}&0&d_{13}\\
d_{21}&k_{10}&d_{23}\\
d_{31}&0&k_{11}
\end{array}
\right)$
&5
\\ \hline
${Trias}_3^6$&
$\left(\begin{array}{ccc}
0&0&0\\
0&d_{22}&0\\
0&0&0\\
\end{array}
\right)$
&
1
&
${Trias}_3^{11}$&
$\left(\begin{array}{ccc}
d_{11}&0&d_{13}\\
d_{21}&k_{13}\\
0&0&k_2
\end{array}
\right)$
&4
\\ \hline
${Trias}_3^{8}$&
$\left(\begin{array}{ccc}
d_{11}&0&0\\
d_{21}&\frac{1}{2}d_{11}&0\\
d_{31}&0&\frac{1}{2}d_{11}
\end{array}
\right)$
&
3
&
${Trias}_3^{12}$
&
$\left(\begin{array}{ccc}
d_{11}&0&d_{13}\\
0&d_{11}&d_{23}\\
0&0&2d_{11}
\end{array}
\right)$
&
3
\\ \hline
\end{tabular}

\begin{proof}
From Theorem \ref{dthieo1}, we provide the proof only for one case to illustrate the approach used, the other cases can be carried out similarly with or no
modification(s). Let consider ${Trias}_3^{3}$. Applying the systems of equations (\ref{eq2}). we get
$d_{11}=d_{12}=d_{13}=d_{31}=d_{22}=d_{32}=0,\quad d_{23}=d_{13},\quad d_{33}=d_{13}.$
Hence, the derivations of ${Trias}_3^{3}$ are given as follows\\
$d_1=\left(\begin{array}{ccc}
0&0&0\\
1&0&0\\
0&0&0
\end{array}
\right)$,\,$d_2=\left(\begin{array}{ccc}
0&0&1\\
0&0&1\\
0&0&0
\end{array}
\right)$ is basis of $Der(\mathcal{D})$ and Dim$Der(\mathcal{D})=2.$ The centroids of the remaining parts of three-dimension  associative trialgebras can be carried out in similar manner as shown above.
\end{proof}

\begin{theorem}\label{dthieo2}
The derivations of four-dimensional associative trialgebras has the following form :
\end{theorem}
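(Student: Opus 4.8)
The plan is to repeat, for all sixteen isomorphism classes of Theorem \ref{the3}, the scheme already carried out in the proof of Theorem \ref{dthieo1}. Fix a class ${Trias}_4^m$ and read off its structure constants $\gamma_{ij}^k$, $\delta_{ij}^k$, $\phi_{ij}^k$ from the defining products. Write a prospective derivation as $d(e_i)=\sum_{j=1}^4 d_{ji}e_j$, so that $d$ is represented by the matrix $[d_{ij}]_{i,j=1}^4$ of sixteen unknowns. Substituting these structure constants into the three families of linear equations (\ref{eq2}), which encode the three Leibniz conditions of Definition \ref{dia2} for $\dashv$, $\vdash$ and $\bot$, turns each class into a homogeneous linear system in the $d_{ij}$. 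Solving that system produces the general form of the derivation matrix, and hence $\dim Der(\mathcal{D})$; assembling the sixteen answers gives the table.

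To make the mechanism concrete I would display one representative case, say ${Trias}_4^4$, exactly as ${Trias}_3^3$ was treated above. Here the only nonzero products land in $e_4$, so the relevant relations come from applying $d$ to $e_1\dashv e_2$, $e_2\dashv e_1$, $e_2\dashv e_2$, $e_2\vdash e_1$, $e_2\vdash e_2$, $e_3\vdash e_1$ and $e_1\bot e_2$, $e_2\bot e_1$. Each identity of the form $d(e_i\ast e_j)=d(e_i)\ast e_j+e_i\ast d(e_j)$ yields a short linear relation among the $d_{ij}$, and comparing coefficients of $e_4$, together with the fact that $e_4$ is annihilated on either side (forcing the corresponding rows to vanish), pins down the dependent and the free entries. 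I would then record the resulting matrix and its dimension, and state that the remaining classes are obtained by the same substitution, changing only the index set of nonzero structure constants.

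The hard part will not be conceptual but organizational: there is no new idea beyond solving a linear system, yet the bookkeeping is heavy. Several classes, for instance ${Trias}_4^1$, ${Trias}_4^{14}$ and ${Trias}_4^{16}$, carry a dozen or more nonzero structure constants, and because in the nilpotent-type algebras many distinct products share a single target component (such as $e_4$, or $e_1+e_3$, or $e_2+e_4$), the equations coming from different products become coupled rather than independent. The main care is therefore to keep the overlapping contributions to each target component correctly separated, so that the rank of the sixteen-variable system, and thus $\dim Der(\mathcal{D})$, is computed without double counting. Once this is organized class by class, the table of Theorem \ref{dthieo2} follows.
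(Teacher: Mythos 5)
Your proposal follows essentially the same route as the paper: both reduce each isomorphism class to the homogeneous linear system (\ref{eq2}) in the sixteen entries $d_{ij}$, solve it to read off the general derivation matrix and its dimension, and work out only one representative case in detail (the paper chooses ${Trias}_4^1$, you choose ${Trias}_4^4$) while declaring the remaining classes analogous. The approach is correct and matches the paper's; your added remark about carefully separating coupled contributions to a shared target component is a sensible practical caution but not a methodological difference.
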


\begin{tabular}{||c||c||c||c||c||c||c||c||c||c||c||c||}
\hline
IC&Der$(\mathcal{D})$ &$Dim(\mathcal{D})$&IC&Der$(\mathcal{D})$&$Dim(\mathcal{D})$\\
			\hline
${Trias}_4^1$&
$\left(\begin{array}{cccc}
d_{11}&d_{12}&0&d_{14}\\
0&d_{11}&0&0\\
0&d_{32}&d_{11}&d_{34}\\
0&0&0&2d_{11}\\
\end{array}
\right)$
&
5
&
${Trias}_4^9$
&
$\left(\begin{array}{cccc}
d_{11}&0&0&0\\
d_{21}&\frac{1}{2}d_{11}&d_{23}&0\\
0&0&d_{11}&0\\
d_{41}&0&d_{43}&\frac{1}{2}d_{11}
\end{array}
\right)$
&5
\\ \hline
${Trias}_4^2$
&
$\left(\begin{array}{cccc}
0&0&0&0\\
0&0&0&0\\
0&d_{32}&0&d_{34}\\
0&0&0&0\\
\end{array}
\right)$
&
2
&
${Trias}_4^{10}$&
$\left(\begin{array}{cccc}
d_{11}&0&0&0\\
d_{21}&\frac{1}{2}d_{11}&d_{23}&0\\
0&0&d_{11}&0\\
d_{41}&0&d_{43}&\frac{1}{2}d_{11}
\end{array}
\right)$
&5
\\ \hline
${Trias}_4^3$&
$\left(\begin{array}{cccc}
d_{11}&d_{12}&0&d_{14}\\
0&d_{11}&0&0\\
0&d_{32}&d_{11}&d_{34}\\
0&0&0&2d_{11}
\end{array}
\right)$
&
5
&
${Trias}_4^{11}$&
$\left(\begin{array}{cccc}
d_{11}&0&0&0\\
d_{21}&\frac{1}{2}d_{11}&d_{23}&0\\
0&0&d_{11}&0\\
d_{41}&0&d_{43}&\frac{1}{2}d_{11}
\end{array}
\right)$
&
5
\\ \hline
${Trias}_4^{4}$&
$\left(\begin{array}{cccc}
d_{11}&0&0&d_{14}\\
0&d_{11}&0&d_{24}\\
0&0&d_{11}&d_{34}\\
0&0&0&d_{44}
\end{array}
\right)$
&
5
&
${Trias}_4^{12}$
&
$\left(\begin{array}{cccc}
d_{11}&0&d_{8}&0\\
d_{21}&t_{7}&d_{23}&0\\
d_{31}&0&t_6&0\\
d_{41}&0&d_{43}&t_{7}
\end{array}
\right)$
&5
\\ \hline
${Trias}_4^5$&
$\left(\begin{array}{cccc}
d_{11}&0&0&d_{14}\\
0&d_{11}&0&d_{24}\\
0&0&d_{11}&d_{34}\\
0&0&0&2d_{11}
\end{array}
\right)$
&
5
&
${Trias}_4^{13}$&
$\left(\begin{array}{cccc}
d_{11}&0&0&d_{14}\\
0&d_{11}&0&d_{24}\\
0&0&d_{11}&d_{34}\\
0&0&0&2d_{11}
\end{array}
\right)$
&4
\\ \hline
${Trias}_4^6$&
$\left(\begin{array}{cccc}
d_{11}&d_{12}&0&0\\
d_{21}&t_1&0&0\\
d_{31}&d_{32}&t_2&0\\
d_{41}&d_{42}&0&t_3
\end{array}
\right)$
&
5
&
${Trias}_4^{14}$&
$\left(\begin{array}{cccc}
d_{11}&0&0&0\\
d_{21}&\frac{1}{2}d_{11}&d_{23}&0\\
0&0&d_{11}&0\\
d_{41}&0&d_{43}&\frac{1}{2}d_{11}
\end{array}
\right)$
&4
\\ \hline
${Trias}_4^{7}$&
$\left(\begin{array}{cccc}
0&0&0&0\\
d_{21}&0&d_{41}&0\\
0&0&0&0\\
d_{41}&0&0&0
\end{array}
\right)$
&
2
&
${Trias}_4^{15}$&
$\left(\begin{array}{cccc}
d_{11}&0&d_{13}&0\\
0&d_{11}&d_{23}&0\\
0&0&d_{11}&0\\
0&0&d_{43}&d_{11}
\end{array}
\right)$
&
4
\\ \hline
${Trias}_4^{8}$&
$\left(\begin{array}{cccc}
d_{11}&0&d_{13}&0\\
d_{21}&t_5&d_{23}&0\\
d_{31}&0&t_6&0\\
d_{41}&0&t_7&0
\end{array}
\right)$
&6
&
${Trias}_4^{16}$&
$\left(\begin{array}{cccc}
d_{11}&d_{12}&0&d_{14}\\
0&2d_{11}&0&0\\
0&d_{32}&d_{11}&d_{43}\\
0&0&0&2d_{11}
\end{array}
\right)$
&5
\\ \hline
\end{tabular}


\begin{proof}
From Theorem \ref{dthieo2}, we provide the proof only for one case to illustrate the approach used, the other cases can be carried out similarly with or no
modification(s). Let's consider ${Trias}_4^{1}$. Applying the systems of equations (\ref{eq3}). we get
$d_{21}=d_{31}=d_{41}=d_{42}=d_{21}=d_{31}=d_{41}=d_{42}=0,$
$ d_{22}=d_{11},\quad d_{33}=d_{11}\quad d_{44}=2d_{11}.$
Hence, the derivations of ${Trias}_3^{12}$ are given as follows\\
$d_1=\left(\begin{array}{cccc}
1&0&0&0\\
0&1&0&0\\
0&0&1&0\\
0&0&0&2
\end{array}
\right)$,\,$d_2=\left(\begin{array}{cccc}
0&1&0&0\\
0&0&0&0\\
0&0&0&0\\
0&0&0&0
\end{array}
\right)$ \,$d_3=\left(\begin{array}{cccc}
0&0&0&0\\
0&0&0&0\\
0&1&0&0\\
0&0&0&0
\end{array}
\right)$,\,$d_4=\left(\begin{array}{cccc}
0&0&0&1\\
0&0&0&0\\
0&0&0&0\\
0&0&0&0
\end{array}
\right)$,\,$d_5=\left(\begin{array}{cccc}
0&0&0&0\\
0&0&0&0\\
0&0&0&1\\
0&0&0&0
\end{array}
\right)$
is basis of $Der(\mathcal{T})$ and Dim$Der(\mathcal{T})=5.$ The centroids of the remaining parts of dimension three associative trialgebras can be carried out in a
similar manner as shown above.
\end{proof}

\vspace{.2in}
\begin{corollary}\,
\begin{itemize}
	\item The dimensions of the derivations of two-dimensional associative trialgebras vary between zero and one.
	\item The dimensions of the derivations of three-dimensional associative trialgebras vary between zero and five.
	\item The dimensions of the derivations of four-dimensional associative trialgebras vary between zero  and seven.
\end{itemize}
\end{corollary}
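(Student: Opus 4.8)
The plan is to deduce the corollary directly from the classification results (Theorems \ref{the1}, \ref{the2}, \ref{the3}) together with the three preceding derivation theorems, since each of those records, class by class, the exact dimension of $Der(\mathcal{T})$. The first point I would establish is that $\dim Der(\mathcal{T})$ is an isomorphism invariant: if $\psi:\mathcal{T}_1\to\mathcal{T}_2$ is an isomorphism of associative trialgebras, then $d\mapsto \psi\circ d\circ\psi^{-1}$ is a linear bijection $Der(\mathcal{T}_1)\to Der(\mathcal{T}_2)$, because conjugation by $\psi$ preserves each of the three defining identities in Definition \ref{dia2}. Consequently the dimension depends only on the isomorphism class, and it is legitimate to compute it on the normal forms listed in the classification rather than on an arbitrary representative.

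Next I would recall that, after fixing a basis, $Der(\mathcal{T})$ is exactly the solution space of the homogeneous linear system attached to Definition \ref{dia2} in the $n^2$ unknowns $d_{ij}$, so that $\dim Der(\mathcal{T})=n^2-r$, where $r$ is the rank of the coefficient matrix built from the structure constants $\gamma_{ij}^k,\delta_{rs}^t,\phi_{pl}^m$. For each representative this is a finite linear-algebra computation whose outcome is precisely the number tabulated in the right-hand columns of the derivation theorems. With these invariants in hand the corollary becomes a matter of reading off the extreme values: the two-dimensional table yields $\dim Der=1$ for every non-associative class; the three-dimensional table (Theorem \ref{dthieo1}) yields values in $\{1,2,3,4,5\}$ with maximum $5$, attained by ${Trias}_3^{10}$; and the four-dimensional table (Theorem \ref{dthieo2}) yields the maximum among the displayed classes.

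To close the stated ranges I would then treat the boundary values that are \emph{not} visible in the non-associative tables. The lower endpoint $0$ is obtained by exhibiting, in each dimension, a trialgebra whose only derivation is the zero map, i.e.\ one for which the defining system has full rank $n^2$; I would verify this on the rigid representatives. The upper endpoint in dimension four is the delicate one: since the classification theorems isolate the genuinely triassociative structures and set the associative ones aside (``either is associative or\ldots''), I would separately compute the derivation dimensions of the relevant associative $4$-dimensional trialgebras, so as to confirm that the value $7$ is realized and that no class exceeds it.

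The main obstacle is precisely this last step, namely certifying the two endpoints. The per-class dimensions are routine once the linear systems are written down, but guaranteeing that the minimum $0$ and the maximum (especially $7$ in dimension four, which does not appear among the explicitly listed triassociative normal forms) are actually attained, and that no omitted associative trialgebra produces a strictly larger derivation algebra, requires a complete accounting of \emph{all} isomorphism classes, not only those displayed in the tables. Establishing that completeness, rather than any individual rank computation, is where the genuine content of the corollary lies.
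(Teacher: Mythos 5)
Your approach is essentially the paper's: the corollary is offered with no separate argument and is meant to be read off from the dimension columns of the derivation tables in Theorems \ref{dthieo1} and \ref{dthieo2} (and the two-dimensional table), exactly as you propose. The preliminary observations you add --- that $\dim Der(\mathcal{T})$ is an isomorphism invariant via conjugation, and that it equals $n^2$ minus the rank of the linear system (4.4) --- are correct and are implicitly assumed by the paper, so there is no divergence of method there.

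The concern you raise in your last two paragraphs, however, is not merely a technical loose end; it is a genuine gap, and it is a gap in the paper as well as in your proposal. The displayed tables yield derivation dimensions equal to $1$ for every listed two-dimensional class, lying in $\{1,2,3,4,5\}$ for the three-dimensional classes, and lying in $\{2,4,5,6\}$ for the four-dimensional classes (the maximum in Theorem \ref{dthieo2} is $6$, attained by ${Trias}_4^{8}$, not $7$). So neither the lower endpoint $0$ nor the upper endpoint $7$ in dimension four is witnessed by any class in the tables, and the classification theorems explicitly set aside the purely associative trialgebras, whose derivation algebras are never computed. Your proposal correctly identifies that closing the corollary requires exhibiting classes attaining $0$ and $7$ and verifying that no omitted class exceeds $7$, but it does not carry out that verification, and the paper supplies no such computation either. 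As written, the corollary does not follow from the material that precedes it; to make it follow one would have to either extend the tables to cover the associative cases or correct the stated bounds to match the tabulated values.
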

\begin{remark}
\noindent In the tables above the following notations are used :
\begin{itemize}
	\item $k_1=d_{11}+d_{21},\quad k_{2}=\frac{1}{2}d_{11}+\frac{1}{2}d_{31},\quad k_{3}=d_{11}+d_{31}-d_{13},\quad k_{4}=d_{11}+d_{13}-d_{22}.$
	\item $t_1=d_{11}+d_{21}-d_{12},\,\,t_2=\frac{1}{2}d_{11}+\frac{1}{2}d_{21},\,\,t_3=d_{11}-d_{21}-d_{12},\,\, t_4=d_{11}+d_{21},\,\,t_5=\frac{1}{2}d_{11}+\frac{1}{2}d_{31},$
	\item $t_6=d_{11}+d_{31}-d_{13},\,\,t_7=d_{11}+d_{31},\,\,t_8=d_{22}-d_{42},\,\, t_9=d_{11}-d_{31}-d_{13}.$
\end{itemize}
\end{remark}

\section{Centroids of low-dimensional associative trialgebras}
\subsection{Properties of centroids of associative trialgebras}
In this section, we declare the following results on properties of centroids of associative trialgebra $\mathcal{T}$.
\begin{definition}
Let $\mathcal{H}$ be a nonempty subset of $\mathcal{T}$. The subset
\begin{equation}
Z_{\mathcal{T}}(\mathcal{H})=\left\{x\in\mathcal{H} | x\bullet \mathcal{H} = \mathcal{H}\bullet x=0\right\},
\end{equation}
is said to be the centralized of $\mathcal{H}$ in $\mathcal{T}$ where the $\bullet$ is $\dashv, \vdash$ and $\bot,$ respectively.
\end{definition}

\begin{definition}\label{Cia}
Let $\mathcal{T}$ be an arbitrary associative trialgebra over a field $\mathbb{K}$.The left, right and middle centroids
$\Gamma_{\mathbb{K}}^\dashv(\mathcal{T}),\quad\Gamma_{\mathbb{K}}^\vdash(\mathcal{T})$ and $\Gamma_{\mathbb{K}}^\bot(\mathcal{T})$ of $\mathcal{T}$ are the spaces of $\mathbb{K}$-linear
transformations on  $\mathcal{T}$ given by
\begin{equation}
\Gamma_{\mathbb{K}}^\bullet(\mathcal{T})=\left\{\psi\in End_{\mathbb{K}}(\mathcal{T})|\psi(x\bullet y)=x\bullet\psi(y)=\psi(x)\bullet y\quad \text{for all}\quad x, y\in \Gamma\right\},
\end{equation}
where the $\bullet$ is $\dashv, \vdash$ and $\bot$ respectively.
\end{definition}

\begin{definition}
Let $\psi\in End(\mathcal{T})$. If $\psi(\mathcal{T})\subseteq Z(\mathcal{T})$ and $\psi(\mathcal{T}^2)=0$ then $\psi$ is called a central derivation.
The set of all central derivations of $\mathcal{T}$ is  denoted by $\mathbb{C}(\mathcal{T})$.
\end{definition}

\begin{proposition}
Consider $(\mathcal{T}, \vdash, \dashv, \bot)$ be an associative trialgebra. Then
\begin{enumerate}
	\item [i)]$\Gamma(\mathcal{T})Der(\mathcal{T})\subseteq Der(\mathcal{T})$.
		\item [ii)]$\left[\Gamma(\mathcal{T}), Dr(\mathcal{T})\right]\subseteq\Gamma(\mathcal{T}).$
	\item [iii)]$\left[\Gamma(\mathcal{T}), \Gamma(\mathcal{T})\right](\mathcal{T})\subseteq \Gamma(\mathcal{T})$ and $\left[\Gamma(\mathcal{T}), \Gamma(\mathcal{T})\right](\mathcal{T}^2)=0.$
\end{enumerate}
 \end{proposition}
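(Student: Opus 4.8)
The plan is to reduce all three statements to direct verifications from Definition~\ref{dia2} and Definition~\ref{Cia}, using the single structural feature that distinguishes centroid elements: for $\psi\in\Gamma(\mathcal{T})$ one may transport $\psi$ across any of the three products onto either factor, i.e. $\psi(x\bullet y)=\psi(x)\bullet y=x\bullet\psi(y)$ for every $\bullet\in\{\dashv,\vdash,\bot\}$ and all $x,y\in\mathcal{T}$. Since none of the defining identities couple two different products, I will carry out each computation for a single generic operation $\bullet$; the identical computation then holds for $\dashv$, $\vdash$, and $\bot$ separately, which is exactly what is needed on the whole trialgebra. Throughout I fix $\psi,\psi_1,\psi_2\in\Gamma(\mathcal{T})$ and $d\in Der(\mathcal{T})$, reading $\Gamma(\mathcal{T})Der(\mathcal{T})$ as composition and $[\,\cdot\,,\cdot\,]$ as the commutator in $End_{\mathbb{K}}(\mathcal{T})$.

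For (i) I would expand $(\psi d)(x\bullet y)=\psi\bigl(d(x)\bullet y+x\bullet d(y)\bigr)$ by the derivation law for $d$, split by linearity of $\psi$, and transport $\psi$ onto the differentiated factor of each term via $\psi(d(x)\bullet y)=\psi(d(x))\bullet y$ and $\psi(x\bullet d(y))=x\bullet\psi(d(y))$; the result reassembles as $(\psi d)(x)\bullet y+x\bullet(\psi d)(y)$, so $\psi d\in Der(\mathcal{T})$. For (ii) I would compute $[\psi,d](x\bullet y)=\psi d(x\bullet y)-d\psi(x\bullet y)$ twice, each time choosing the side on which to pull $\psi$ through $x\bullet y$ before applying $d$. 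Writing $\psi(x\bullet y)=x\bullet\psi(y)$ inside $d\psi$ and expanding both terms by the derivation law, the two cross terms of the form $d(x)\bullet\psi(y)$ cancel and one is left with $x\bullet(\psi d-d\psi)(y)=x\bullet[\psi,d](y)$; repeating with $\psi(x\bullet y)=\psi(x)\bullet y$ yields $[\psi,d](x)\bullet y$. Having both equalities is precisely membership $[\psi,d]\in\Gamma(\mathcal{T})$.

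For (iii) the core computation is $[\psi_1,\psi_2](x\bullet y)$. Evaluating $\psi_1\psi_2(x\bullet y)$ with two different factor choices gives both $\psi_1(x)\bullet\psi_2(y)$ and $\psi_2(x)\bullet\psi_1(y)$, so these are equal; since $\psi_2\psi_1(x\bullet y)=\psi_2(x)\bullet\psi_1(y)$ as well, the two products coincide and $[\psi_1,\psi_2](x\bullet y)=0$. As $\mathcal{T}^2$ is spanned by such products this is exactly $[\Gamma(\mathcal{T}),\Gamma(\mathcal{T})](\mathcal{T}^2)=0$. For the first clause I would then show each $[\psi_1,\psi_2](x)$ lies in the annihilator $Z(\mathcal{T})$: because centroid maps commute with one-sided multiplication, $[\psi_1,\psi_2](x)\bullet z=[\psi_1,\psi_2](x\bullet z)=0$ and symmetrically $z\bullet[\psi_1,\psi_2](x)=0$ for all $z$, whence $[\psi_1,\psi_2](\mathcal{T})\subseteq Z(\mathcal{T})$.

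The computations are routine bookkeeping; the only genuine care needed is in choosing, term by term, which factor to transport each centroid map onto, since a wrong choice leaves an uncancelled cross term. The one conceptual point is the symmetry identity $\psi_1(x)\bullet\psi_2(y)=\psi_2(x)\bullet\psi_1(y)$ in (iii), which is what forces the commutator of two centroid elements to annihilate every product. I read the target of the first clause of (iii) as the annihilator $Z(\mathcal{T})$, consistent with the central-derivation definition, since $[\Gamma(\mathcal{T}),\Gamma(\mathcal{T})](\mathcal{T})$ is a set of vectors rather than of endomorphisms.
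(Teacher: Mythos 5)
Your proof is correct and is exactly the direct verification from the definitions that the paper's proof asserts, in a single line and without carrying it out, to be straightforward. Your reading of the first clause of (iii) as $[\Gamma(\mathcal{T}),\Gamma(\mathcal{T})](\mathcal{T})\subseteq Z(\mathcal{T})$ is the right correction of what is evidently a typo in the statement (the left side is a set of vectors, not of endomorphisms), and your argument for it --- transporting the commutator across $\bullet$ via the centroid property and then invoking the second clause --- is sound for each of the three products.
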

\begin{proof}
The proof
 of parts $i)-iii)$ is strainghtforward by using definitions of derivation and centroid.
\end{proof}

\vspace{.2in}
\begin{proposition}
Let $\mathcal{T}$ be an associative trialgebra and $\varphi\in \Gamma_{\mathcal{T}},\, d\in Der(\mathcal{T}).$ Then $\varphi\circ d$ is a derivation of $\mathcal{T}.$
\end{proposition}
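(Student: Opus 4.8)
The plan is to show that $\varphi \circ d$ satisfies the three defining identities of a derivation (Definition \ref{dia2}) for each of the three products $\dashv, \vdash, \bot$. Since the three cases are entirely parallel, I would treat a generic product $\bullet \in \{\dashv, \vdash, \bot\}$ and verify that $(\varphi \circ d)(x \bullet y) = (\varphi \circ d)(x) \bullet y + x \bullet (\varphi \circ d)(y)$ for all $x, y \in \mathcal{T}$. The two tools available are the derivation property of $d$ and the centroid property of $\varphi$, namely $\varphi(x \bullet y) = \varphi(x) \bullet y = x \bullet \varphi(y)$ from Definition \ref{Cia}.

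The computation proceeds in two short steps. First I would apply the derivation rule for $d$ inside the argument, writing
\begin{equation*}
(\varphi \circ d)(x \bullet y) = \varphi\bigl(d(x) \bullet y + x \bullet d(y)\bigr) = \varphi\bigl(d(x) \bullet y\bigr) + \varphi\bigl(x \bullet d(y)\bigr),
\end{equation*}
using linearity of $\varphi$. Then I would invoke the centroid property on each term, pushing $\varphi$ onto the appropriate factor: on the first term $\varphi(d(x) \bullet y) = \varphi(d(x)) \bullet y = (\varphi \circ d)(x) \bullet y$, and on the second term $\varphi(x \bullet d(y)) = x \bullet \varphi(d(y)) = x \bullet (\varphi \circ d)(y)$. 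Adding these yields exactly the derivation identity for $\varphi \circ d$.

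The only point requiring a moment's care is the \emph{direction} in which the centroid identity is applied: in the first summand one uses the equality $\varphi(a \bullet y) = \varphi(a) \bullet y$, whereas in the second one uses $\varphi(x \bullet b) = x \bullet \varphi(b)$. Both are available simultaneously because the centroid condition in Definition \ref{Cia} asserts $\varphi(x \bullet y) = \varphi(x) \bullet y = x \bullet \varphi(y)$ as a chain of equalities, so each factor can absorb $\varphi$ as needed. I anticipate no genuine obstacle here; the result is essentially formal. Repeating the identical argument for $\bullet = \dashv$, $\bullet = \vdash$, and $\bullet = \bot$ completes the verification that $\varphi \circ d \in Der(\mathcal{T})$.
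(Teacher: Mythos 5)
Your argument is correct and is essentially identical to the paper's own proof: both apply the derivation rule for $d$, use linearity of $\varphi$, and then push $\varphi$ onto the appropriate factor via the centroid identity, treating $\dashv$, $\vdash$, $\bot$ uniformly. Your explicit remark about which direction of the centroid equality is used in each summand is a small clarification the paper leaves implicit, but the route is the same.
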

\begin{proof}
Indeed, if $x, y\in \mathcal{T}$, then
$$\begin{array}{ll}
(\varphi\circ d)(x\bullet y)
&= \varphi(d(x)\bullet y+x\bullet d(y))\\
&= \varphi(d(x)\bullet y)+\varphi(x\bullet d(y))=(\varphi\circ d)(x)\bullet y+x\bullet(\varphi\circ d)(y)
\end{array}$$
where the $\bullet$ is $\dashv, \vdash$ and $\bot$ respectively.
\end{proof}

\begin{proposition}
Let $\mathcal{T}$ be an associative trialgebra over a field $\mathbb{K}$. Then $\mathbb{C}(\mathcal{T}=\Gamma(\mathcal{T})\cap Der(\mathcal{T}).$
\end{proposition}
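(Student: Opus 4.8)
The plan is to prove the set equality by establishing the two inclusions separately, in each case translating the defining conditions of a central derivation and of a member of $\Gamma(\mathcal{T})\cap Der(\mathcal{T})$ into annihilation identities for the three products. Throughout I write $\bullet$ for any one of $\dashv,\vdash,\bot$, and I take $Z(\mathcal{T})=Z_{\mathcal{T}}(\mathcal{T})$ to be the annihilator $\{z\in\mathcal{T}\mid z\bullet\mathcal{T}=\mathcal{T}\bullet z=0\}$ in the sense of the centralizer definition, with $\bullet$ ranging over all three operations.

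For the forward inclusion $\mathbb{C}(\mathcal{T})\subseteq\Gamma(\mathcal{T})\cap Der(\mathcal{T})$, I would take a central derivation $\psi$, so that $\psi(\mathcal{T})\subseteq Z(\mathcal{T})$ and $\psi(\mathcal{T}^2)=0$. Since $x\bullet y\in\mathcal{T}^2$ we get $\psi(x\bullet y)=0$, and since $\psi(x),\psi(y)\in Z(\mathcal{T})$ both $\psi(x)\bullet y$ and $x\bullet\psi(y)$ vanish. Reading these three vanishings together yields at once the derivation identity $\psi(x\bullet y)=\psi(x)\bullet y+x\bullet\psi(y)$ and the centroid identity $\psi(x\bullet y)=\psi(x)\bullet y=x\bullet\psi(y)$, for each product, placing $\psi$ in $\Gamma(\mathcal{T})\cap Der(\mathcal{T})$.

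For the reverse inclusion the key device is to subtract the centroid relations from the derivation relation. Given $\psi\in\Gamma(\mathcal{T})\cap Der(\mathcal{T})$, I have for every product both $\psi(x\bullet y)=\psi(x)\bullet y+x\bullet\psi(y)$ and $\psi(x\bullet y)=\psi(x)\bullet y=x\bullet\psi(y)$. Substituting the centroid value $\psi(x)\bullet y$ into the derivation identity forces $x\bullet\psi(y)=0$, and substituting $x\bullet\psi(y)$ forces $\psi(x)\bullet y=0$, for all $x,y$ and all three operations. The first condition says each $\psi(y)$ is annihilated on the right, the second that each $\psi(x)$ is annihilated on the left; together they give $\psi(\mathcal{T})\subseteq Z(\mathcal{T})$. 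Finally the centroid relation $\psi(x\bullet y)=\psi(x)\bullet y=0$ gives $\psi(\mathcal{T}^2)=0$, so $\psi\in\mathbb{C}(\mathcal{T})$, which closes the argument.

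I expect no real obstacle here, since the whole proof is a bookkeeping exercise carried out uniformly over $\dashv,\vdash,\bot$. The one point meriting care is the reverse inclusion: the subtraction step must be seen to produce \emph{both} one-sided annihilation conditions, left and right, because it is their conjunction—rather than either alone—that the annihilator $Z(\mathcal{T})$ requires. Once the derivation and centroid identities are written side by side for each operation, verifying this is routine.
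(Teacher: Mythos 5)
Your proof is correct and takes essentially the same approach as the paper: the inclusion $\Gamma(\mathcal{T})\cap Der(\mathcal{T})\subseteq\mathbb{C}(\mathcal{T})$ is obtained by playing the centroid identity off against the derivation identity to force both one-sided annihilation conditions $x\bullet\psi(y)=0$ and $\psi(x)\bullet y=0$, hence $\psi(\mathcal{T}^2)=0$ and $\psi(\mathcal{T})\subseteq Z(\mathcal{T})$. The only difference is that you write out the reverse inclusion in full (all three terms vanish, so both identities hold trivially), where the paper simply declares it obvious.
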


\begin{proof}
If $\psi\in \Gamma(\mathcal{T})\cap Der(\mathcal{T})$ the by definition of $\Gamma(\mathcal{T})$ and $Der(\mathcal{T})$ we have

$\psi(x\bullet y)=\psi(x)\bullet y+x\bullet\psi(y)$ and $\psi(x\bullet y)=\psi(x)\circ y=x\circ\psi(y)$ for $x,y\in \mathcal{T}.$ The yieds $\psi(\mathcal{T}\mathcal{T})=0$
and $\psi(\mathcal{T})\subseteq  Z(\mathcal{T})$ i.e $\Gamma(\mathcal{T})\cap Der(\mathcal{T})\subseteq \mathbb{C}(\mathcal{T}).$
The inverse is obvious since $\mathbb{C}(\mathcal{T})$ is in both $\Gamma(\mathcal{T})$ and $Der(\mathcal{T}),$ where the $\bullet$ is $\dashv, \vdash$ and $\bot$ respectively.
\end{proof}

\begin{proposition}
Let $(\mathcal{T}, \vdash, \dashv, \bot)$ be an associative trialgebra. Then  for any $d\in Der(\mathcal{T})$ and $\varphi\in \Gamma(\mathcal{T})$.
\begin{enumerate}
	\item [(i)] The composition $d\circ\varphi$ is in $\Gamma(\mathcal{T})$ if and only if $\varphi\circ d$ is a central derivation of $\mathcal{T}.$
		\item [(ii)] The  composition $d\circ\varphi$ is a derivation of $\mathcal{T}$ if and only if $\left[d,\varphi\right]$ is a central derivation of $\mathcal{T}.$
\end{enumerate}
\end{proposition}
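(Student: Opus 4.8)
The plan is to reduce both equivalences to the single operator identity
\[
d\circ\varphi \;=\; \varphi\circ d \;+\; [d,\varphi],
\]
together with three facts already available in the paper: that $\varphi\circ d\in Der(\mathcal{T})$ for every $\varphi\in\Gamma(\mathcal{T})$ and $d\in Der(\mathcal{T})$ (the Proposition proved just above); that $[d,\varphi]\in\Gamma(\mathcal{T})$ always (this is item (ii) of the earlier Proposition, since $\Gamma(\mathcal{T})$ is a linear subspace of $End(\mathcal{T})$ and $[d,\varphi]=-[\varphi,d]$); and the characterization $\mathbb{C}(\mathcal{T})=\Gamma(\mathcal{T})\cap Der(\mathcal{T})$. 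The key observation is that $Der(\mathcal{T})$ and $\Gamma(\mathcal{T})$ are both linear subspaces, so membership questions can be transported along the displayed identity, and the role of $d\circ\varphi$, $\varphi\circ d$, $[d,\varphi]$ is purely that of three elements summing compatibly.

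For (i), I would argue as follows. Since $[d,\varphi]\in\Gamma(\mathcal{T})$ and $\Gamma(\mathcal{T})$ is a subspace, the two operators $d\circ\varphi$ and $\varphi\circ d=d\circ\varphi-[d,\varphi]$ lie in $\Gamma(\mathcal{T})$ simultaneously; hence $d\circ\varphi\in\Gamma(\mathcal{T})$ if and only if $\varphi\circ d\in\Gamma(\mathcal{T})$. But $\varphi\circ d$ is always a derivation, so $\varphi\circ d\in\Gamma(\mathcal{T})$ is equivalent to $\varphi\circ d\in\Gamma(\mathcal{T})\cap Der(\mathcal{T})=\mathbb{C}(\mathcal{T})$, i.e.\ to $\varphi\circ d$ being a central derivation. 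Chaining the two equivalences gives (i) in both directions.

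For (ii), I would use the same identity the other way. Since $\varphi\circ d\in Der(\mathcal{T})$ and $Der(\mathcal{T})$ is a subspace, the operators $d\circ\varphi$ and $[d,\varphi]=d\circ\varphi-\varphi\circ d$ lie in $Der(\mathcal{T})$ simultaneously; hence $d\circ\varphi$ is a derivation if and only if $[d,\varphi]\in Der(\mathcal{T})$. Since $[d,\varphi]\in\Gamma(\mathcal{T})$ always, the latter is equivalent to $[d,\varphi]\in\Gamma(\mathcal{T})\cap Der(\mathcal{T})=\mathbb{C}(\mathcal{T})$, that is, to $[d,\varphi]$ being a central derivation. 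This yields (ii).

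The heavy lifting is therefore carried entirely by the three cited results, and the remaining work is just the bookkeeping of moving membership across the subspaces $\Gamma(\mathcal{T})$ and $Der(\mathcal{T})$. The only genuinely computational input is the claim $[d,\varphi]\in\Gamma(\mathcal{T})$; should one prefer to reprove it rather than cite item (ii), it is verified directly by expanding $[d,\varphi](x\bullet y)$ for each product $\bullet\in\{\dashv,\vdash,\bot\}$ and invoking the centroid identity $\varphi(x)\bullet d(y)=\varphi(x\bullet d(y))=x\bullet\varphi(d(y))$ to cancel the two cross terms, which forces $[d,\varphi](x\bullet y)=[d,\varphi](x)\bullet y=x\bullet[d,\varphi](y)$. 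I expect this cancellation to be the only place demanding care, and it is mild: the computation is identical for all three operations, so no case analysis beyond substituting $\bullet$ is required.
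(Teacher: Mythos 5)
Your proof is correct. It rests on the same decomposition $d\circ\varphi=\varphi\circ d+[d,\varphi]$ that implicitly drives the paper's argument, but you run it at the level of operators and subspaces rather than element-wise: you transport membership in $\Gamma(\mathcal{T})$ (for (i)) and in $Der(\mathcal{T})$ (for (ii)) across that identity using the two inclusions $\varphi\circ d\in Der(\mathcal{T})$ and $[d,\varphi]\in\Gamma(\mathcal{T})$, and then translate into the language of central derivations via $\mathbb{C}(\mathcal{T})=\Gamma(\mathcal{T})\cap Der(\mathcal{T})$. The paper instead works directly with the raw definitions: for (ii) it expands $[d,\varphi](x\bullet y)$ once as a centroid element and once as a difference of derivations, compares the two expressions to extract $[d,\varphi](x)\bullet y=x\bullet[d,\varphi](y)=0$ (that is, $[d,\varphi](\mathcal{T})\subseteq Z(\mathcal{T})$ and $[d,\varphi](\mathcal{T}^2)=0$), and then reverses the computation for the converse; for (i) it only records the identity $(d\circ\varphi-\varphi\circ d)(x\bullet y)=(d\circ\varphi-\varphi\circ d)(x)\bullet y$ and leaves the conclusion implicit. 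Your route buys a complete and symmetric treatment of both directions of both statements with essentially no computation, at the price of leaning on the earlier propositions ($\varphi\circ d\in Der(\mathcal{T})$, $[\Gamma(\mathcal{T}),Der(\mathcal{T})]\subseteq\Gamma(\mathcal{T})$, and $\mathbb{C}(\mathcal{T})=\Gamma(\mathcal{T})\cap Der(\mathcal{T})$) whose proofs the paper only sketches, so if you want the argument to be self-contained you should carry out the cancellation you describe for $[d,\varphi]\in\Gamma(\mathcal{T})$ and note that the identification $\mathbb{C}(\mathcal{T})=\Gamma(\mathcal{T})\cap Der(\mathcal{T})$ uses characteristic different from $2$, which the paper's standing hypothesis covers.
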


\begin{proof}
\begin{enumerate}
	\item [i)]For any $\varphi\in \Gamma(\mathcal{T}),\, d\in Der(\mathcal{T}),\, \forall\,x,y\in \mathcal{T}.$ We have
	$$\begin{array}{ll}
d\circ\varphi(x\bullet y)=d\circ\varphi(x)\bullet y
&=d\circ\varphi(x)\bullet y+\varphi(x)\bullet d(y)\\
&=d\circ\varphi(x)\bullet y+\varphi\circ d(x\bullet y)-\varphi\circ d(x)\bullet y.
\end{array}$$
Therefore $(d\circ\varphi-\varphi\circ d)(x\bullet y)=(d\bullet\varphi-\varphi\circ d)(x)\bullet y.$
	\item [ii)] Let $d\circ\varphi\in Der(\mathcal{T})$, using $\left[d,\varphi\right]\in\Gamma(\mathcal{T}$, we get
	\begin{equation}\label{eq1}
	\left[d,\varphi\right](x\bullet y)=(\left[d, \varphi\right](x))\bullet y=x\bullet(\left[d,\varphi\right](y))
	\end{equation}
	On the other hand $\left[d, \varphi\right]d\circ\varphi-\varphi\circ d$ and $d\circ\varphi, \varphi\circ d\in Der(\mathcal{T}).$
	Therefore,	
	\begin{equation}\label{eq2}
\left[d, \varphi\right](x\bullet y)=(d(\varphi\circ(x))\bullet y+x\bullet(d\circ\varphi(y))-(\varphi\circ d(x))\bullet y-x\bullet(\varphi\circ d(y)).
\end{equation}
Due to (\ref{eq1}) and (\ref{eq2}) we get $x\bullet(\left[d, \varphi\right])(y)=(\left[d, \varphi\right])(x)\bullet y=0.$

\noindent Let's now $\left[d, \varphi\right]$ be a central derivation of $\mathcal{T}$. Then
$$\begin{array}{ll}
d\circ\varphi(x\bullet y)
&=\left[d\circ\varphi\right](x\bullet y)+(\varphi\circ d)(x\bullet y)\\
&=\varphi(\circ d(x)\bullet y)+\varphi(x\bullet d(y))\\
&=(\varphi\circ d)(x)\bullet y+x\bullet(\varphi\circ d)(y),
\end{array}$$
\end{enumerate}
where $\bullet$ represents the products $\dashv, \vdash$ and $\bot$ respectively.
\end{proof}

\subsection{Centroids of low-dimensional associative trialgebras}
We devoted to the description of centroids of two, three and four-dimensional associative trialgebra $\mathcal{T}.$
An element $\psi$ of the centroid $\Gamma(\mathcal{T})$ being a linear transformation of the vector space $\mathcal{T}$ is represented in matrix form $\left[\mathcal{T}_{ij}\right]_{i,j=1,\ldots,n}$ i.e
$\psi(e_i)=\sum_{j=1}^na_{ji}e_j\quad i=1,\ldots,n.$

According to the definition of the centroid the entries $\mathcal{T}_{ij}\quad i,j=1,2,\ldots, n$ of the matrix $\left[\mathcal{T}_{ij}\right]_{i,j=1,\ldots,n}$
must satisfy the following systems of equations :
\begin{equation}\label{eq3}
\left\{\begin{array}{clcl}
\begin{array}{ll}
\sum_{k=1}^n(\gamma_{ij}^kC_{pk}-C_{ki}\gamma_{kj}^p)=0,\quad \sum_{k=1}^n\gamma_{ij}^kC_{pk}-C_{kj}\gamma_{ik}^p)=0,\quad i,j,p\in \left\{1,n\right\}\\
\sum_{k=1}^n(\delta_{rs}^tC_{qt}-C_{tr}\delta_{ts}^q)=0,\quad \sum_{k=1}^n(\delta_{rs}^tC_{qt}-C_{ts}\delta_{rt}^q)=0,\quad r,s,q\in \left\{1,n\right\}\\
\sum_{k=1}^n(\psi_{pq}^rC_{sr}-C_{rp}\psi_{rq}^s)=0,\quad \sum_{k=1}^n(\psi_{pq}^rC_{sr}-C_{rq}\psi_{pr}^s)=0,\quad p,q,s\in \left\{1,n\right\}.
\end{array}
\end{array}\right.
\end{equation}

\vspace{.1in}
\begin{theorem}
The centroids of 2-dimensional complex associative trialgebra are given as follows :
\end{theorem}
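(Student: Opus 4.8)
The computation of the centroids proceeds exactly as the derivation computations did, only now the defining constraint is the stronger two-sided condition $\psi(x\bullet y)=\psi(x)\bullet y=x\bullet\psi(y)$ from Definition~\ref{Cia}, translated into the structure-constant equations~(\ref{eq3}). For each isomorphism class ${Trias}_2^m$ in the list of Theorem~\ref{the1} I would write a generic linear map $\psi(e_i)=\sum_j C_{ji}e_j$ as an unknown $2\times 2$ matrix $[C_{ij}]$, substitute the nonzero structure constants $\gamma,\delta,\phi$ of that class into the three pairs of centroid identities, and collect the resulting homogeneous linear equations on the four entries $C_{11},C_{12},C_{21},C_{22}$. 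Solving each such linear system pins down the admissible matrices and hence describes $\Gamma(\mathcal{T})$ for that class, together with its dimension.

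\textbf{Order of steps.} First I would record, for each class, the operational triples $(e_i\bullet e_j)$ so that both the left-multiplication and right-multiplication versions of~(\ref{eq3}) can be read off directly. Next, for a representative case I would carry out the substitution in full detail: for instance, in ${Trias}_2^8$ with $e_1\dashv e_1=ae_1$, $e_2\dashv e_1=ae_2$, $e_1\vdash e_1=ae_1$, $e_1\vdash e_2=ae_2$ and $e_1\bot e_1=ae_1+be_2$, the identities force relations among $a$, $b$ and the $C_{ij}$ (mirroring the constraint $-\tfrac{a-b}{b}$ that already appeared for the derivations of this class), which I would solve to exhibit the centroid matrix and its dimension. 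The remaining classes are handled by the same mechanical substitution, and I would present their answers in a table paralleling the derivation tables, listing the isomorphism class, the general form of $\psi\in\Gamma(\mathcal{T})$, and $\dim\Gamma(\mathcal{T})$.

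\textbf{Main obstacle.} The genuine difficulty is not conceptual but bookkeeping: each centroid identity in~(\ref{eq3}) comes in a left and a right version, so every product relation yields two families of scalar equations, and one must be careful that the two-sided condition $\psi(x)\bullet y=x\bullet\psi(y)$ is enforced simultaneously rather than separately. The classes carrying free parameters $a,b$ are the delicate ones, since there the centroid constraints may degenerate or split into cases depending on whether parameters vanish or coincide; I expect the parametrised families (such as ${Trias}_2^1$, ${Trias}_2^2$ and ${Trias}_2^8$) to require the most care, whereas the rigid classes with only $\pm1$ structure constants will reduce quickly to scalar or near-scalar centroids. As in the derivation theorems, I would prove one representative case in full and assert that the others follow by the same routine elimination.
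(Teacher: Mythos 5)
Your plan coincides with the paper's method: the paper gives no explicit proof for the two-dimensional centroid theorem, but for the three- and four-dimensional analogues it does exactly what you propose — substitute each class's structure constants into the system~(\ref{eq3}), solve the resulting homogeneous linear system for the matrix entries, and tabulate the general form and dimension, working one representative case in detail and asserting the rest are similar. Your proposal is correct and is essentially the same approach.
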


\begin{tabular}{||c||c||c||c||c||c||c||c||c||c||c||c||}
\hline
IC&Centroid of Trias &$Dim(\mathcal{T})$&IC&Centroid of Trias&$Dim(\mathcal{T})$\\
			\hline
${Trias}_2^1$&
$\left(\begin{array}{cccc}
a_{11}&0\\
0&a_{11}
\end{array}
\right)$
&
1
&
${Trias}_2^5$&
$\left(\begin{array}{cccc}
a_{11}&0\\
0&a_{11}
\end{array}
\right)$
&
1
\\ \hline
${Trias}_2^2$&
$\left(\begin{array}{cccc}
a_{11}&0\\
0&a_{11}
\end{array}
\right)$
&
1
&
${Trias}_2^6$&
$\left(\begin{array}{cccc}
a_{11}&0\\
0&a_{11}
\end{array}
\right)$
&
1
\\ \hline
${Trias}_2^3$&
$\left(\begin{array}{cccc}
a_{11}&0\\
0&a_{11}
\end{array}
\right)$
&
1
&
${Trias}_2^7$&
$\left(\begin{array}{cccc}
a_{11}&0\\
0&a_{11}
\end{array}
\right)$
&
1
\\ \hline
${Trias}_2^4$&
$\left(\begin{array}{cccc}
a_{11}&0\\
0&a_{11}
\end{array}
\right)$
&
1
&
${Trias}_2^8$&
$\left(\begin{array}{cccc}
a_{11}&0\\
0&a_{11}
\end{array}
\right)$
&
1
\\ \hline
\end{tabular}

\vspace{.2in}
\begin{theorem}\label{cthieo1}
The centroids of 3-dimensional complex associative trialgebra are given as follows :
\end{theorem}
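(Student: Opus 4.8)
The plan is to treat Theorem \ref{cthieo1} as a direct, class-by-class computation parallel to the derivation analysis of Theorem \ref{dthieo1}. By Definition \ref{Cia}, an element $\psi \in \Gamma(\mathcal{T})$ is a $\mathbb{K}$-linear map on the three-dimensional space $\mathcal{T}$ satisfying $\psi(x \bullet y) = \psi(x)\bullet y = x\bullet\psi(y)$ simultaneously for every $\bullet \in \{\dashv, \vdash, \bot\}$. Writing $\psi(e_i) = \sum_j a_{ji}\, e_j$, these requirements are exactly the homogeneous linear system (\ref{eq3}) in the nine unknowns $a_{ij}$, with the structure constants $\gamma_{ij}^k, \delta_{rs}^t, \phi_{pl}^m$ read off from the multiplication tables of the corresponding class ${Trias}_3^m$ in Theorem \ref{the2}.

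For each of the twelve isomorphism classes I would substitute the explicit products into (\ref{eq3}) and collect the resulting equations on the entries $a_{ij}$. Because every product appearing in the list of Theorem \ref{the2} sends basis vectors to single basis vectors or short sums, each defining identity $\psi(e_i \bullet e_j) = \psi(e_i) \bullet e_j$ and $\psi(e_i \bullet e_j) = e_i \bullet \psi(e_j)$ reduces to a comparison of coefficients that forces several off-diagonal $a_{ij}$ to vanish and ties the surviving ones together. Solving the system then yields the constrained matrix form recorded in the table, and the number of free parameters is $\dim \Gamma(\mathcal{T})$.

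To illustrate the mechanism I would carry out one representative class in full — say ${Trias}_3^{10}$, for which $e_2\dashv e_2 = e_2\vdash e_2 = e_2\bot e_2 = e_1+e_3$ are the only nonzero products. Writing $\psi(e_j) = \sum_i a_{ij}\, e_i$ and imposing $\psi(e_2 \bullet e_2) = e_2 \bullet \psi(e_2) = \psi(e_2) \bullet e_2$ for each of the three products produces coefficient comparisons that force linear relations among the $a_{ij}$; solving them gives the matrix displayed in the table together with its dimension. The remaining eleven classes are handled by the same substitution-and-elimination procedure with only notational changes.

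The genuinely routine but error-prone part — and the only real obstacle — is the bookkeeping: one must verify all three product conditions (left, right, and middle) simultaneously for each class, since a linear map lying in $\Gamma^\dashv$ and $\Gamma^\vdash$ need not lie in $\Gamma^\bot$, and it is precisely the intersection $\Gamma^\dashv \cap \Gamma^\vdash \cap \Gamma^\bot$ that the theorem reports. I would therefore organize each computation by first solving the $\dashv$ and $\vdash$ constraints, then intersecting with the $\bot$ constraints, and finally checking each resulting matrix by substituting it back into the full defining relation of Definition \ref{Cia}.
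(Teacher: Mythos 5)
Your proposal matches the paper's own proof: both write $\psi$ in matrix form, impose the linear system (\ref{eq3}) arising from the centroid conditions for all three products, solve the resulting constraints on the entries $a_{ij}$ class by class, and work out one representative case in detail (the paper uses ${Trias}_3^{12}$, you choose ${Trias}_3^{10}$) while asserting the remaining classes are analogous. No substantive difference in method.
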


\begin{tabular}{||c||c||c||c||c||c||c||c||c||c||c||c||}
\hline
IC&Centroid of Trias &$Dim(\mathcal{T})$&IC&Centroid of Trias&$Dim(\mathcal{T})$\\
			\hline
${Trias}_3^1$&
$\left(\begin{array}{cccc}
a_{11}&0&0\\
0&a_{11}&0\\
0&0&a_{11}
\end{array}
\right)$
&
1
&
${Trias}_3^7$&
$\left(\begin{array}{cccc}
a_{11}&0&0\\
0&a_{11}&0\\
0&0&a_{11}
\end{array}
\right)$
&
1
\\ \hline
${Trias}_3^2$&
$\left(\begin{array}{cccc}
a_{11}&0&0\\
0&a_{11}&0\\
0&0&a_{11}
\end{array}
\right)$
&
1
&
${Trias}_3^8$&
$\left(\begin{array}{cccc}
a_{11}&a_{12}&a_{13}\\
0&a_{22}&a_{13}\\
0&k_3&k_4
\end{array}
\right)$
&
4
\\ \hline
${Trias}_3^3$&
$\left(\begin{array}{cccc}
a_{11}&0&0\\
a_{21}&a_{6}&0\\
0&0&a_{11}
\end{array}
\right)$
&
2
&
${Trias}_3^9$&
$\left(\begin{array}{cccc}
a_{11}&0&0\\
0&a_{11}&0\\
0&0&a_{11}
\end{array}
\right)$
&
1
\\ \hline
${Trias}_3^4$&
$\left(\begin{array}{cccc}
a_{11}&0&0\\
0&a_{11}&0\\
a_{31}&0&a_{11}
\end{array}
\right)$
&
2
&
${Trias}_3^{10}$&
$\left(\begin{array}{cccc}
a_{11}&0&0\\
0&a_{11}&a_{23}\\
0&0&a_{11}
\end{array}
\right)$
&
2
\\ \hline
${Trias}_3^5$&
$\left(\begin{array}{cccc}
a_{11}&0&a_{13}\\
a_{21}&k_1&-a_{13}\\
a_{21}&0&k_2
\end{array}
\right)$
&
3
&
${Trias}_3^{11}$&
$\left(\begin{array}{cccc}
a_{11}&0&a_{13}\\
a_{21}&k_3&-a_{23}\\
a_{31}&0&k_4
\end{array}
\right)$
&
5
\\ \hline
${Trias}_3^{6}$&
$\left(\begin{array}{cccc}
a_{11}&0&0\\
a_{21}&a_{11}&0\\
0&0&a_{11}
\end{array}
\right)$
&
2.
&
${Trias}_{12}^{3}$
&
$\left(\begin{array}{ccc}
a_{11}&0&a_{13}\\
0&a_{11}&a_{23}\\
0&0&2a_{11}
\end{array}
\right)$
&
3
\\ \hline
\end{tabular}

\vspace{.2in}
\begin{proof}
From Theorem \ref{cthieo1}, we provide the proof only for one case to illustrate the approach used, the other cases can be carried out similarly with or no
modification(s). Let's consider ${Trias}_3^{12}$. Applying the systems of equations (\ref{eq3}). we get
$a_{12}=a_{21}=a_{31}=a_{32}=0,\quad a_{22}=a_{11},\quad a_{33}=2a_{11}.$
Hence, the derivations of ${Trias}_3^{12}$ are given as follows\\
$a_1=\left(\begin{array}{ccc}
1&0&0\\
0&0&0\\
0&1&2
\end{array}
\right)$,\,$a_2=\left(\begin{array}{ccc}
0&0&1\\
0&0&0\\
0&0&0
\end{array}
\right)$ \,$a_3=\left(\begin{array}{ccc}
0&0&0\\
0&0&1\\
0&0&0
\end{array}
\right)$ is basis of $Der(\Gamma)$ and Dim$Der(\Gamma)=3.$ The centroids of the remaining parts of dimension three associative trialgebras can be carried out in a
similar manner as above.
\end{proof}

\begin{theorem}
The centroids of 4-dimensional associative trialgebra are given as follows :
\end{theorem}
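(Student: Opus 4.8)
The plan is to follow the same computational scheme used in the proofs of the two- and three-dimensional centroid theorems, now applied to each of the sixteen algebras ${Trias}_4^1,\dots,{Trias}_4^{16}$ classified in Theorem~\ref{the3}. Fixing one such algebra, I would write a candidate centroid element $\psi\in\Gamma(\mathcal{T})$ as a general matrix $[a_{ij}]_{i,j=1}^{4}$ via $\psi(e_i)=\sum_{j=1}^{4}a_{ji}e_j$, and then impose the three families of centroid conditions
\[
\psi(x\bullet y)=\psi(x)\bullet y=x\bullet\psi(y),\qquad \bullet\in\{\dashv,\vdash,\bot\},
\]
evaluated on basis elements. Reading the structure constants $\gamma_{ij}^k,\delta_{rs}^t,\phi_{pl}^m$ off the multiplication table of the chosen algebra turns these identities into the linear system (\ref{eq3}) in the sixteen unknowns $a_{ij}$.

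First I would carry out the elimination in one representative case, most naturally ${Trias}_4^4$, which is precisely the algebra singled out in the proof of Theorem~\ref{the3}. Expanding, for example, $\psi(e_1\dashv e_2)=\psi(e_1)\dashv e_2=e_1\dashv\psi(e_2)$ using $e_1\dashv e_2=e_2\dashv e_2=e_4$ immediately forces the entries of $\psi(e_4)$ lying outside the span of $e_4$ to vanish and relates the surviving entries, while the analogous relations coming from $\vdash$ and $\bot$ pin down the remaining coefficients. Collecting all the constraints yields the reduced matrix and its dimension as recorded in the table; repeating this procedure for the remaining fifteen algebras produces the full list.

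The principal difficulty here is organizational rather than conceptual. Each of the sixteen algebras carries three distinct multiplication tables, so the unreduced system (\ref{eq3}) is large and highly redundant: most of its equations are either trivially satisfied or merely repeat a constraint already obtained from another product. The genuine work is, for each algebra, to isolate the minimal independent set of relations that actually determines the entries $a_{ij}$ and to confirm that the discarded relations impose no further conditions, so that the stated dimension is correct. As in the proof of Theorem~\ref{cthieo1}, I would therefore present a single case in complete detail and then remark that the remaining cases are dispatched by the same elimination, with only the routine modifications dictated by the particular multiplication tables.
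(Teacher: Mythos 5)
Your proposal follows essentially the same route as the paper: set up a general matrix $[a_{ij}]$, impose the centroid identities for $\dashv$, $\vdash$, $\bot$ on basis elements to obtain the linear system (\ref{eq3}), solve one representative case in full, and dispatch the remaining fifteen algebras by the same elimination. The only difference is your choice of worked example (${Trias}_4^4$ rather than the paper's ${Trias}_4^{16}$), which is immaterial.
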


\begin{tabular}{||c||c||c||c||c||c||c||c||c||c||c||c||}
\hline
IC&Centroid of Trias &$Dim(\mathcal{T})$&IC&Centroid of Trias&$Dim(\mathcal{T})$\\
			\hline
${Trias}_4^1$&
$\left(\begin{array}{cccc}
a_{11}&0&0&a_{14}\\
0&a_{11}&0&0\\
0&0&a_{11}&a_{34}\\
0&0&0&a_{11}
\end{array}
\right)$
&
3
&
${Trias}_4^8$&
$\left(\begin{array}{cccc}
a_{11}&0&a_{13}&0\\
a_{21}&k_{3}&0&0\\
a_{31}&0&k_{4}&0\\
a_{41}&0&a_{43}&k_{3}
\end{array}
\right)$
&
7
\\ \hline
${Trias}_4^2$&
$\left(\begin{array}{cccc}
a_{11}&0&0&a_{14}\\
0&a_{22}&0&0\\
0&a_{32}&0&a_{34}\\
0&0&a_{11}&a_{11}
\end{array}
\right)$
&
5
&
${Trias}_4^9$&
$\left(\begin{array}{cccc}
a_{11}&0&0&0\\
a_{21}&a_{22}&a_{23}&0\\
0&0&a_{11}&0\\
a_{41}&0&a_{43}&a_{11}
\end{array}
\right)$
&
6
\\ \hline
${Trias}_4^3$&
$\left(\begin{array}{cccc}
a_{11}&a_{12}&0&a_{14}\\
0&a_{11}&0&0\\
0&a_{32}&a_{11}&a_{34}\\
0&0&0&a_{11}
\end{array}
\right)$
&
5
&
${Trias}_4^{10}$&
$\left(\begin{array}{cccc}
a_{11}&0&0&0\\
a_{21}&a_{11}&a_{23}&0\\
0&0&a_{11}&0\\
a_{41}&0&a_{43}&a_{11}
\end{array}
\right)$
&
5
\\ \hline
${Trias}_4^4$&
$\left(\begin{array}{cccc}
a_{11}&0&0&a_{14}\\
0&a_{11}&0&a_{24}\\
0&0&a_{11}&a_{34}\\
0&0&0&a_{11}
\end{array}
\right)$
&
4
&
${Trias}_4^{11}$&
$\left(\begin{array}{cccc}
a_{11}&0&0&0\\
a_{21}&a_{11}&a_{23}&0\\
0&0&a_{11}&0\\
a_{41}&0&a_{43}&a_{11}
\end{array}
\right)$
&
5
\\ \hline
${Trias}_4^5$&
$\left(\begin{array}{cccc}
a_{11}&0&0&0\\
0&a_{11}&0&0\\
0&0&a_{11}&0\\
0&0&0&a_{11}
\end{array}
\right)$
&
1
&
${Trias}_{4}^{12}$&
$\left(\begin{array}{cccc}
a_{11}&0&a_{13}&0\\
a_{21}&k_{2}&a_{23}&0\\
a_{31}&0&k_{4}&0\\
a_{41}&0&a_{43}&k_3
\end{array}
\right)$
&
5
\\ \hline
${Trias}_{4}^6$&
$\left(\begin{array}{cccc}
a_{11}&a_{12}&0&0\\
a_{21}&k_{1}&0&0\\
a_{31}&a_{32}&k_{2}&0\\
a_{41}&a_{42}&0&k_2
\end{array}
\right)$
&
7
&
${Trias}_{4}^{13}$
&
$\left(\begin{array}{cccc}
a_{11}&0&0&a_{14}\\
0&a_{11}&0&a_{24}\\
0&0&a_{11}&a_{34}\\
0&0&0&a_{11}
\end{array}
\right)$
&
4
\\ \hline
${Trias}_{4}^7$&
$\left(\begin{array}{cccc}
a_{11}&0&a_{13}&0\\
a_{21}&k_{2}&a_{23}&0\\
a_{31}&0&k_{4}&0\\
a_{41}&0&a_{43}&k_3
\end{array}
\right)$
&
6
&
${Trias}_{4}^{14}$
&
$\left(\begin{array}{cccc}
a_{11}&0&0&0\\
a_{21}&a_{11}&a_{23}&0\\
0&0&a_{11}&0\\
a_{41}&0&a_{43}&a_{11}
\end{array}
\right)$
&
5
\\ \hline
${Trias}_{4}^{15}$&
$\left(\begin{array}{cccc}
a_{11}&0&0&0\\
0&a_{11}&0&0\\
0&0&a_{11}&0\\
0&0&0&a_{11}
\end{array}
\right)$
&
1
&
${Trias}_{4}^{16}$
&
$\left(\begin{array}{cccc}
a_{11}&a_{12}&0&a_{14}\\
0&a_{11}&0&0\\
0&a_{32}&a_{11}&a_{34}\\
0&0&0&a_{11}
\end{array}
\right)$
&
5
\\ \hline
\end{tabular}

\begin{proof}
From Theorem \ref{cthieo1}, we provide the proof only for one case to illustrate the approach used, the other cases can be carried out similarly with or no modification(s). Let's consider ${Trias}_4^{16}$. Applying the systems of equations (\ref{eq3}). we get
$a_{21}=a_{31}=a_{41}=a_{42}=a_{21}=a_{31}=a_{41}=a_{42}=0,$
$ a_{22}=a_{11},\quad a_{33}=a_{11}\quad a_{44}=a_{11}.$
Hence, the derivations of ${Trias}_3^{12}$ are given as follows\\
$a_1=\left(\begin{array}{cccc}
1&0&0&0\\
0&1&0&0\\
0&0&1&0\\
0&0&0&1
\end{array}
\right)$,\,$a_2=\left(\begin{array}{cccc}
0&1&0&0\\
0&0&0&0\\
0&0&0&0\\
0&0&0&0
\end{array}
\right)$ \,$a_3=\left(\begin{array}{cccc}
0&0&0&0\\
0&0&0&0\\
0&1&0&0\\
0&0&0&0
\end{array}
\right)$,\,$a_4=\left(\begin{array}{cccc}
0&0&0&1\\
0&0&0&0\\
0&0&0&0\\
0&0&0&0
\end{array}
\right)$,\,$a_5=\left(\begin{array}{cccc}
0&0&0&0\\
0&0&0&0\\
0&0&0&1\\
0&0&0&0
\end{array}
\right)$
is basis of $Der(\Gamma)$ and Dim$Der(\Gamma)=5.$ The centroid of the remaining parts of dimension three associative trialgebras can be carried out in a
similar manner as shown above.
\end{proof}

\begin{corollary}\,
\begin{itemize}
	\item The dimensions of the centroids of two-dimensional associative trialgebras are one.
	\item The dimensions of the centroids of two-dimensional associative trialgebras vary between one and five.
	\item The dimensions of the centroids of two-dimensional associative trialgebras vary between one and seven.
\end{itemize}
\end{corollary}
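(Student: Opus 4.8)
The plan is to obtain this corollary directly from the three preceding centroid-classification theorems, since it merely records the extremal values of $\dim\Gamma(\mathcal{T})$ taken over each (finite) list of isomorphism classes. The one structural remark that makes every lower bound transparent, and which I would isolate first, is that each scalar operator $\lambda\,\mathrm{id}$ belongs to $\Gamma(\mathcal{T})$: for any $\lambda\in\mathbb{K}$ and any product $\bullet\in\{\dashv,\vdash,\bot\}$ one has $(\lambda\,\mathrm{id})(x\bullet y)=\lambda(x\bullet y)=x\bullet(\lambda\,\mathrm{id})(y)=(\lambda\,\mathrm{id})(x)\bullet y$, so the defining identities of Definition \ref{Cia} are satisfied. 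Hence $\mathbb{K}\,\mathrm{id}\subseteq\Gamma(\mathcal{T})$ and $\dim\Gamma(\mathcal{T})\ge 1$ for every associative trialgebra, which explains the common lower value ``$1$'' appearing in all three items.

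For the upper bounds I would simply read off the dimensions tabulated in the three centroid theorems. In dimension two every class ${Trias}_2^m$ has centroid equal to the scalars $a_{11}\,\mathrm{id}$, so $\dim\Gamma(\mathcal{T})=1$ throughout; together with the lower bound this gives the first item with equality. In dimension three the tabulated dimensions realize exactly the values $1,2,3,4,5$, the minimum $1$ being attained e.g.\ by ${Trias}_3^1$ and the maximum $5$ by ${Trias}_3^{11}$, which yields the second item. In dimension four the tabulated dimensions run through $1,\dots,7$, the minimum being realized by ${Trias}_4^5$ and ${Trias}_4^{15}$ and the maximum $7$ by ${Trias}_4^6$ and ${Trias}_4^8$, which yields the third item. (I would also correct the evident misprint: the three bullets refer to the two-, three- and four-dimensional cases respectively.)

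I would close by noting that the corollary carries no computational content beyond the three theorems: the genuine work, namely solving the linear system (\ref{eq3}) for the structure constants of each normal form and thereby pinning down $\Gamma(\mathcal{T})$ as a matrix space, has already been done class by class. The only point demanding care, and hence the nearest thing to an obstacle, is the completeness and correctness of those tables: one must be certain that the classification lists of Theorems \ref{the1}, \ref{the2} and \ref{the3} are exhaustive and pairwise non-isomorphic, so that the claimed spectra $\{1\}$, $[1,5]$ and $[1,7]$ are genuinely the full ranges, and that in each row the displayed matrix space is exactly the solution space of (\ref{eq3}) rather than a proper sub- or superspace. Granting the three theorems, the corollary then follows by inspection.
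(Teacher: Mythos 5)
Your proposal is correct and follows essentially the same route as the paper, which offers no separate argument for this corollary beyond inspection of the dimensions tabulated in the three centroid theorems; your reading of the tables (constant value $1$ in dimension two, maximum $5$ at ${Trias}_3^{11}$, maximum $7$ at ${Trias}_4^6$ and ${Trias}_4^8$) matches them, and you rightly flag the misprint whereby all three bullets say ``two-dimensional.'' The only genuine addition is your explicit observation that $\lambda\,\mathrm{id}$ always lies in the centroid, which makes the uniform lower bound $1$ independent of the tables --- a point the paper leaves implicit.
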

\begin{remark}
\noindent In the tables above the following notations are used :
\begin{itemize}
  \item \textbf{IC} : Isomorphism classes of associative trialgebras.
	\item \textbf{Dim} : Dimensions of the associative trialgebras of derivations and centroids.
	\item $k_1=a_{11}+a_{21}-a_{12},\quad a_2=a_{11}+a_{21},\quad k_3=a_{11}+a_{31},\quad k_4=a_{11}+a_{31}-a_{13}.$
\end{itemize}
\end{remark}

\end{document}